\documentclass[a4paper,11pt,nosumlimits]{article}
\usepackage[utf8]{inputenc}
\usepackage{amsmath,amssymb,amsfonts,amsthm}
\usepackage{array} 
\usepackage{url,hyperref}
\usepackage{graphicx}
\usepackage{epstopdf}
\usepackage{color}

\textwidth=16cm 
\oddsidemargin=0cm \evensidemargin=0cm  
\textheight=25cm
\topmargin=-1.5cm
\setlength{\parindent}{0pt} 
\setlength{\parskip}{5pt}
\linespread{2}

\clubpenalty=10000 \widowpenalty=10000 

\renewenvironment{itemize} 
	{\begin{list}
		{$\bullet$}{\setlength{\parskip}{0cm} \setlength{\topsep}{0cm}
		 \setlength{\partopsep}{0cm} \setlength{\itemsep}{0cm} \setlength{\parsep}{0cm}\item[]}}
	{\end{list}}
 
\theoremstyle{plain}
\newtheorem{theorem}{Theorem}
\newtheorem{lemma}{Lemma}
\newtheorem{statement}{Statement}
\newtheorem{property}{Property}
\newtheorem{assumption}{Assumption}
\newtheorem{proposition}{Proposition}

\theoremstyle{definition}
\newtheorem{definition}{Definition}

\theoremstyle{remark}
\newtheorem{remark}{Remark}

\newcommand{\nneg}{[0,\infty)}
\newcommand{\pos}{(0,\infty)}
\newcommand{\ntr}{\mathbb{N}}
\newcommand{\zplus}{\mathbb{Z}_+}

\newcommand{\lf}{\lfloor}
\newcommand{\rf}{\rfloor}
\newcommand{\lc}{\lceil}
\newcommand{\rc}{\rceil}

\newcommand{\pr}{\mathbb{P}}
\newcommand{\ex}{\mathbb{E}}
\newcommand{\ind}{\mathbb{I}}

\newcommand{\eps}{\varepsilon}
\newcommand{\dlt}{\delta}
\newcommand{\lmb}{\lambda}

\newcommand{\Sgm}{\Sigma}
\newcommand{\ome}{\omega}
\newcommand{\Ome}{\Omega}

\newcommand{\sso}{{(0)}}
\newcommand{\ssn}{{(n)}}
\newcommand{\ssk}{{(k)}}
\newcommand{\ssl}{{(l)}}
\newcommand{\ssj}{{(j)}}

\newcommand{\vo}{\mathbf{0}}
\newcommand{\vone}{\mathbf{1}}
\newcommand{\ovt}{\overline{t}}
\newcommand{\ovq}{\overline{Q}}
\newcommand{\ovvq}{\overline{\mathbf{Q}}}
\newcommand{\ovb}{\overline{b}}
\newcommand{\ova}{\overline{a}}
\newcommand{\ovva}{\overline{\mathbf{a}}}

\newcommand{\cop}{\mathop{\rm =}\limits^{\text{d}}}
\newcommand{\wtil}{\widetilde}

\newcommand{\vc}{\mathbf}

\begin{document}

\title{Random Fluid Limit of an Overloaded Polling Model}
\author{ Maria Frolkova, Sergey Foss and Bert Zwart\footnote{MF~is with
CWI, P.O. Box 94079, 1098 XG Amsterdam, The Netherlands. E-mail:
\url{M.Frolkova@cwi.nl}.
 SF~is with Department of Actuarial Mathematics and Statistics, Heriot-Watt University,
 EH14 4AS Edinburgh, UK and Institute of Mathematics, Novosibirsk, Russia. E-mail: \url{
s.foss@hw.ac.uk}. BZ~is with CWI. E-mail: \url{Bert.Zwart@cwi.nl}.
BZ~is also affiliated with EURANDOM, VU University Amsterdam, and
Georgia Institute of Technology. The research of~MF and~BZ in supported by an~NWO VIDI grant.}}
\date{\today}

\maketitle

\begin{abstract}
In the present paper, we study the evolution of an~overloaded cyclic polling model that starts empty. Exploiting a~connection with multitype branching processes, we derive fluid asymptotics for the joint queue length process. Under passage to the fluid dynamics, the server switches between the queues infinitely many times in any~finite time interval causing frequent oscillatory behavior of the fluid limit in the neighborhood of zero. Moreover, the fluid limit is random. Additionally, we suggest a~method that establishes finiteness of moments of the busy period in an~$M/G/1$ queue.

\vspace{2ex} \textit{Keywords:} Cyclic polling, overload, random fluid limit, branching processes, multi-stage gated discipline, busy period moments.

\vspace{2ex} \textit{MSC2010:} Primary 60K25, 60F17; Secondary
90B15, 90B22.
\end{abstract}

\section{Introduction}

This paper is dedicated to stochastic networks called polling models. Broadly speaking, a~polling model can be defined as multiple queues served one at a~time by a~single server. As for further details --- service disciplines at the queues, routing of the server, and its walking times from one queue to another --- there exist numerous variations motivated by the wide range of applications. The earliest polling study to appear in the literature seems to be by Mack~\cite{Mack} (1957), who investigated a~problem in the British cotton industry involving a~single repairman cyclically patrolling multiple machines, inspecting them for malfunctioning and repairing them. Over the past few decades, polling techniques have been of extensive use in the areas of computer and communication networks as well as manufacturing and maintenance. Along with that, a~vast body of related literature has grown. For overviews of the available results on polling models and their analysis methodologies, we refer the reader to Takagi~\cite{Takagi86, Takagi90, Takagi97}, Boxma~\cite{Boxma91}, Yechiali~\cite{Yechiali} and Borst~\cite{Borst96}.

Across the great variety of polling models, there exists the ``classical" one, which was first used in the analysis of time-sharing computer systems in the early 70's. This model is {\it cyclic}, i.e. if there are $I$ queues in total, they are visited by the server in the cyclic order $1,2,\ldots,I,1,2,\ldots$. All of the queues are supposed to be infinite-buffer queues, and to each of them there is a~Poisson stream of customers with i.i.d.\ service times. After all visits to a~queue, i.i.d.\ walking, or switchover, times are incurred. All interarrival times, service times and switchover times are mutually independent, and their distributions may vary from queue to queue as well as the service disciplines. Examples of the most common service disciplines are {\it exhaustive} (the queue is served until it becomes empty), {\it gated} (in the course of a~visit, only those customers get served who are present in the queue when the server arrives to, or~{\it poll}, the queue), and {\it $k$-limited} (at most $k$ customers get served per visit). The present paper is also centered around the classical polling model. We assume zero switchover times and allow a~wide class of service disciplines that includes both exhaustive and gated policies and is discussed later in more detail.

Amongst~desirable properties of any service system, the first one is stability. So, naturally, the major part of the polling related literature is focused on the performance of stable models. Foss \& Kovalevskii~\cite{FK99} obtained~an interesting result of null recurrence over a~thick region of parameter space for a~two-server modification of polling. MacPhee {\it et~al.} \cite{MacPhee07,MacPhee08} have recently observed the same phenomenon for a~hybrid polling/Jackson network, where the service rate and customer rerouting probabilities are randomly updated each time the server switches from one queue to another.

The study of critically loaded polling models was initiated about two decades ago by Coffman {\it et~al.} \cite{Coffman95, Coffman98}, who proved a~so called averaging principle: in the diffusion heavy traffic limit, certain functionals of the joint workload process can be expressed via the limit total workload, which was shown to be a~reflected Brownian motion and a~Bessel process in the case of zero and non-zero switchover times, respectively.  In subsequent years, the work has been carried on by Kroese~\cite{Kroese}, Vatutin \& Dyakonova~\cite{Vatutin}, Altman \& Kushner~\cite{Altman}, van der Mei~\cite{Mei07} and others. In particular, heavy-traffic approximations of the steady state and waiting time distributions have been derived.

Although overloaded service systems are an~existing reality and it is of importance to control or predict how fast they blow up over time, to the best of our knowledge, for polling models this problem has not been addressed in the literature so far. The present paper aims to fill in the gap. Moreover, this appears to be a~really exciting problem because it reveals the following unusual phenomenon. Our interest is in fluid approximations, i.e. the limit of the scaled joint queue length process
\[
(Q_1, \ldots, Q_I)(x^\ssn \cdot) / x^\ssn
\]
along a~deterministic sequence $x^\ssn \to \infty$. Remarkably, in contrast to the many basic queueing systems with deterministic fluid limits, overloaded polling models preserve some randomness under passage to the fluid dynamics. Other examples of simplistic designs combined with random fluid limits are two-queue two-server models of Foss \& Kovalevskii~\cite{FK99} and Kovalevskii {\it et al.}~\cite{KTF05}. To the latter work~\cite{KTF05}, we refer for an~insightful discussion of  the nature of randomness in fluid limits in general and for an~overview of the publications on the topic.

To illustrate the key idea that has led us to the result, consider the simple, symmetric model of~$I=2$ queues with exhaustive service, zero switchover times and empty initial condition (without the last assumption, the analysis becomes much simpler). In isolation, the queues are stable, and the whole system is overloaded, i.e. $1/2 < \lmb / \mu < 1$, where $\lmb$ and $1/\mu$ are the arrival rate and the mean service time, respectively (in both queues). Denote the supposedly existing limit queue length process by $(\ovq_1, \ovq_2)(\cdot)$. Note that, given the limit size of the queue in service at any non-zero time instant, the entire trajectories of both queues can be restored by the SLLN. Indeed, the limit total population $(\ovq_1 + \ovq_2)(\cdot)$ grows at rate $2 \lmb - \mu$. Because of the symmetry, at any fixed time $T > 0$, the queues might (in the limit) be in service 
with equal probabilities, let it be queue~$1$. Then in Fig.~\ref{fig:intro} the limit queues~$1$ and $2$ follow the solid and dashed trajectories, respectively. Starting from time~$T$, the limit queue~$1$ gets cleared up at rate~$\lmb - \mu$ until it becomes empty, say, at time~$\ovt^{(1)}$. Since~$\ovt^{(1)}$, when the limit total population $(2 \lmb - \mu)\ovt^{(1)}$ comes from queue~$2$ alone, queue~$2$  gets cleared up at rate~$\lmb - \mu$ until it becomes empty at time~$\ovt^{(2)}$, while queue~$1$ grows at the arrival rate~$\lmb$. Moving forward and backward in this way, one can continue the two trajectories onto~$[T, \infty)$ and $(0,T]$, respectively, and see that they oscillate at an~infinite rate when approaching zero. Now, the same algorithm applies if $t^{(1)}$ is known, which is the first switching instant after~$T$, and the following crucial observation makes it possible to find the distribution of~$t^{(1)}$ (so the randomness of $t^{(1)}$ makes the fluid limit random). Let customer~2 to be a~descendant of customer~1 if customer~2 arrives to the system while customer~1 is receiving service, or customer~2 is a~descendant of a~descendant of customer~$1$. Then the size of the non-empty queue at switching instant form a branching process.

\begin{figure}[!htb] 
\centering
\includegraphics[scale=0.7]{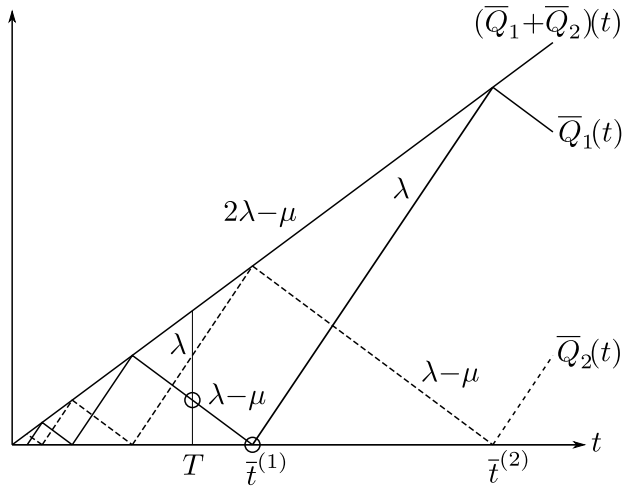}
\caption{Fluid limit of a symmetric two-queue model with exhaustive service}
\label{fig:intro}
\end{figure}

The idea of representing arriving customers as descendants of the customer in service, has appeared in Foss~\cite{branchingFoss} in the studies of an~extension of Klimov's $\mu c$-rule, and then in Resing~\cite{Resing93}, who introduced a~wide class of service disciplines that, for the classical polling model (more general periodic server routing is also allowed), guarantee the joint queue length at the successive polling instants of a~fixed queue to form a multitype branching process (MTBP). This embedded MTBP is the cornerstone of the analysis that we carry out in this paper.

We now describe the class of service disciplines that we allow in this paper. It is a~subclass of the MTBP-policies, and we call them {\it multigated} meaning that each visit of each queue consists of a~number of consecutive gated service phases. The upper bound on the number of phases, called the {\it gating index}, comes from the input data (together with the interarrival and service times). Gating indices for different visits of the same queue are i.i.d.\ random variables whose distribution may vary from queue to queue, and gating indices for different queues are mutually independent. Gating indices equal $1$ and $\infty$ correspond to exhaustive and the classical gated service, respectively. Multigated policies with deterministic gating indices were studied (and, in~fact, introduced) recently by van~Wijk {\it et al.}~\cite{Sandra12} with the purpose of balancing fairness and efficiency of polling models. Van der Mei and co-authors~\cite{vdMRe07, vdMRo11} consider multi-stage gated policies, but those are different than in~\cite{Sandra12} and here.

Throughout the paper, we consider the case of zero switchover times. The case of non-zero switchover times can be treated with similar methods.

As for the proofs, multiple asymmetric queues with non-exhaustive service create more work compared to the simple two-queue example discussed above. Knowing the limit total population is of little use now since it only reduces the dimension of the problem by one. We show that, in general situation, the fluid limit queue length trajectory $(\ovq_1, \ldots, \ovq_I)(\cdot)$ is determined by~$2I$ random parameters: the earliest polling instants $\ovt_1, \ldots, \ovt_I$ that, in the limit, follow a~fixed time instant, and the limit sizes $\ovq_1(\ovt_1), \ldots, \ovq_I(\ovt_I)$ of the corresponding polled queues. The overload assumption and multigated policies provide the framework of supercritical MTBP's, and we can apply the Kesten-Stigum theorem~\cite{KestenStigum66, KestenStigum96} (the classical result on asymptotics of supercritical MTBP's) to find the distribution of, for example, $(\ovq_1, \ldots, \ovq_I)(\ovt_1)$. Then suitable SLLN's imply that the the other parameters $\ovt_1, \ldots, \ovt_I$, $\ovq_2(\ovt_2), \ldots, \ovq_I(\ovt_I)$ can be expressed either via the Kesten-Stigum limit $(\ovq_1, \ldots, \ovq_I)(\ovt_1)$ or via each other. Note also that the Kesten-Stigum theorem requires certain moments of the offspring distribution to be finite. The visit at a~queue is the longest when service is exhaustive, implying more customers in the other queues in the end of the cycle. So attempts to satisfy the moment conditions of the Kesten-Stigum theorem boil down to proving finiteness of the corresponding moment for the busy period of an~$M/G/1$ queue, which is an~interesting and novel result by itself. Besides, we obtain an~estimate for this moment, and our approach is valid for a~wide class of regularly varying convex functions, in particular power and logarithmic functions.

The rest of the paper is organised as follows. Section~\ref{sec:model_description} describes the cyclic polling model and the class of service disciplines. Section~\ref{sec:MTBP} explains the connection between the model and MTBP's, gives some preliminaries from the theory of MTBP's and derives characteristics of the embedded MTBP. In Section~\ref{sec:fluid_limit}, we state our main result --- the fluid limit theorem --- and discuss the optimal representation of the fluid limit from the computational point of view (Remark~\ref{rem:optimal_computation}). Section~\ref{sec:proof_MTBP} proves the results of Section~\ref{sec:MTBP}, see the proof of Lemma~\ref{lem:log} for estimates on the moments of the busy period of an~$M/G/1$ queue. Section~\ref{sec:proof_fluid_limit} proves the fluid limit theorem. Proofs of some auxiliary statements are given in the Appendix.

\paragraph{Notation} 

With $x:=y$ we mean that $x$ is defined as equal to $y$. 

The standard sets are: positive integers $\ntr := \{1,2,\ldots\}$, non-negative integers $\zplus := \ntr \cup \{0\}$, integers $\mathbb{Z} = \{ 0, \pm1, \pm2, \ldots \}$.

All vectors are $I$-dimensional row vectors, $\cdot^T$ denotes the operation of transposition. All vector notations are boldface. The vector with all coordinates equal~$0$ is denoted by~$\vo$, with all coordinates equal~$1$ by~$\vone$, and with coordinate~$i$ equal~$1$ and the other coordinates equal~$0$ by~$\vc{e}_i$. The following operations are defined on vectors $\vc{x} = (x_1, \ldots, x_I)$, $\vc{y} = (y_1, \ldots, y_I)$,
\begin{itemize}
\item partial order: $\vc{x} \leq \vc{y}$ if $x_i \leq y_i$ for all $i$;
\item $L_1$-norm $|\vc{x}| = \sum_{i=1}^I x_i$;
\item coordinate-wise product  $\vc{x} \times \vc{y} = (x_1 y_1, \ldots, x_I y_I)$;
\item power: if all $x_i > 0$, then  $\vc{x}^\vc{y} = \prod_{i=1^I} x_i^{y_i}$;
\item binomial coefficient: if $\vc{x}, \vc{y} \in \zplus^I$ and $\vc{y} \leq \vc{x}$, $\dbinom{\vc{x}}{\vc{y}} = \prod_{i=1}^I \dbinom{x_i}{y_i}  = \prod_{i=1}^I \dfrac{x_i!}{y_i!(x_i - y_i)!}$.
\end{itemize}

For a~real $x$, let $\lf x \rf $ be its maximum integer lower bound, $\lc x \rc $ its minimum integer upper bound, and put $\{ x \}= x - \lf x \rf$.

If a~superscript is in parentheses, then it is an upper index, otherwise a~power.

If random objects $X$ and $Y$ are equal in distribution, we write $X \cop Y$ and say that $X$ is a~copy of~$Y$.

\section{Model description} \label{sec:model_description}
This section contains a detailed description of the cyclic polling model and the class of service disciplines that we allow for this model. It also specifies the stochastic assumptions. All stochastic primitives introduced throughout the paper are defined on a common probability space $(\Omega, \mathcal{F}, \pr)$ with expectation operator $\ex$. 

\paragraph{Cyclic polling} Consider a~system that consists of multiple infinite-buffer queues labeled by $i = 1, \dots, I$, where $I$ is finite, and a~single server. There are external arrivals of customers to the queues that line up in the corresponding buffers in the order of arrival. The server idles if and only if the entire system is empty. While the system is non-empty, the server works at unit rate serving one queue at a~time and switching from one queue to another in the cyclic order: after a~period of serving queue~$i$, called a {\it visit to queue~$i$}, a~visit to queue~$i \!\! \mod I+1$ follows. Note that, while the system is non-empty, empty queues get visited as well in the sense that, once the server arrives to (or, {\it polls}) an~empty queue, say at time~$t$, it has to leave immediately, and the visit in this case is defined to be the empty interval $[t,t)$. Now suppose that, at a~particular time instant, the system empties upon completion of a~non-empty visit to queue~$i$. For mathematical convenience, we assume that such an~instant is followed by a~single (empty) visit to each of the empty queues~$i+1, \ldots, I$. Then the server idles until the first arrival into the empty system. If that arrival is to queue~$i$, a~single (empty) visit to each of the empty queues~$1, \ldots, i-1$ precedes the visit to queue~$i$. In the course of a~visit, a~number of customers at the head of the queue get served in the order of arrival and depart. The service disciplines at the queues specify how many customers should get served per visit, we now proceed with their description.

\paragraph{Multigated service} With multigated service in a~queue we mean that each visit of that queue consisits of a number of consecutive gated service phases. More formally, we say that {\it the server gates a queue} at a~particular time instant meaning that the queue is in service at the moment, and all the customers found in the queue at the moment are guaranteed to receive service during the current visit. Customers gated together are served in the order of arrival. For each visit, its {\it gating index} is defined: it is the upper bound on the number of times the server is supposed to gate the queue in the course of the visit. The gating indices for different queues and for different visits of the same queue might be different. The first time during a~visit when the server gates the queue is upon polling the queue. The other gating instants are defined by induction: as soon as the customers found in the queue the last time it was gated have been served, the queue is gated again provided that the total number of gating procedures is not going to exceed the gating index. If the queue is empty upon gating, the server switches to the next queue, and thus the actual number of gating procedures performed during a~visit might differ from the gating index for that visit. Now we define a~generic multigated service discipline.

\begin{definition}
Let a random variable $X$ take values in~$\zplus \cup \{ \infty \}$. The service discipline at a~particular queue is called {\it $X$-gated} if the gating indices for different visits of this queue are i.i.d.\ copies of $X$. If a~gating index equals $0$,  the server should leave immediately after polling the queue. The values $1$ and $\infty$ of a~gating index correspond to conventional gated and exhaustive service, respectively.
\end{definition}

\begin{remark}
Multigated service disciplines guarantee the population of the polling system at polling instants of a fixed queue to an~MTBP, laying the foundation for the analysis that we carry out in this paper. We discuss this connection with MTBP's in~detail in the next section.
\end{remark}

\paragraph{Stochastic assumptions} We consider the cyclic polling system described above to evolve in the continuous time horizon $t \in \nneg$. At $t = 0$, the system is empty. Arrivals of customers to queue~$i$ form a~Poisson process $E_i(\cdot)$ of rate $\lmb_i$. Introduce also the vector of arrival rates
\[
\boldsymbol{\lmb} := (\lmb_1, \ldots, \lmb_I).
\]
Service times of queue~$i$ customers are drawn from a~sequence $\{ B_i^\ssn \}_{n \in \ntr}$ of i.i.d.\\ copies of a~positive random variable $B_i$ with a finite mean value $1/\mu_i$. Gating indices for queue~$i$ are drawn from a sequence $\{ X_i^\ssn \}_{n \in \ntr}$ of i.i.d.\ copies of a~random variable $X_i$ taking values in~$\zplus \cup \{\infty\}$. The random elements $E_i(\cdot)$, $\{ B_i^{\ssn} \}_{n \in \ntr}$ and $\{ X_i^\ssn \}_{n \in \ntr}$, $i = 1, \ldots, I$, are mutually independent. Additionally, we impose the following conditions on the load intensities and service times.

\begin{assumption} \label{ass:load}
For all~$i$, $\lmb_i / \mu_i < 1$, and $\sum_{i=1}^I \lmb_i / \mu_i > 1$.
\end{assumption}

\begin{assumption} \label{ass:log}
For all~$i$, $\ex B_i \log B_i < \infty$.
\end{assumption}

We study the system behavior in terms of its queue length process
\[
\vc{Q}(\cdot) = (Q_1, \ldots, Q_I)(\cdot),
\]
where $Q_i(t)$ is the number of customers in queue~$i$ at time~$t$.

\section{Connection with MTBP's} \label{sec:MTBP}
This section is devoted to a multitype branching process (MTBP) embedded in the queue length process $\vc{Q}(\cdot)$ and enabling its further analysis.

To start with, we divide the time horizon into pairwise-disjoint finite intervals in such a~way that each interval includes a~single (possibly, empty) visit of the server to each of the queues starting from the first one. Let
\begin{align*}
\nneg &= \bigcup_{n \in \zplus} [t^\ssn, t^{(n+1)}), \\
[t^\ssn, t^{(n+1)}) &= [t^\ssn, t^\ssn_1) \bigcup_{i=1}^I [t^\ssn_i,t^\ssn_{i+1}),
\end{align*}
where 
\begin{itemize}
\item $t^{(0)}=0$ and  $t^\ssn \leq t_1^\ssn \leq \ldots \leq t_{I+1}^\ssn = t^{(n+1)}$;
\item if the system is empty at $t^\ssn$, then the interval $[t^\ssn, t^\ssn_1)$ is the period of waiting until the first arrival, otherwise $t^\ssn = t^\ssn_1$;
\item the interval $[t^\ssn_i,t^\ssn_{i+1})$ is the visit to queue~$i$ following~$t^\ssn$, with $t^\ssn_i = t^\ssn_{i+1}$ if the visit is empty.
\end{itemize}

The interval $[t^\ssn, t^{(n+1)})$ is called {\it session~$n$}. The interval $[t^\ssn_i,t^\ssn_{i+1})$ is called {\it visit~$n$ to queue~$i$}, and the gating index for this visit is $X^\ssn_i$.

For multigated service disciplines that we consider in this paper, the following holds.
\begin{property} \label{pty:branching}
For all $i = 1, \ldots, I$, the customers found in queue~$i$ at a~polling instant get replaced during the course of the visit by i.i.d.\ copies of a~random vector $\check{\vc{L}}_i  = (\check{L}_{i,1},\ldots,\check{L}_{i,I})$ that has the distribution of $\vc{Q}(t_{i+1}^\ssn)$ given that $\vc{Q}(t_i^\ssn) = \vc{e}_i$ \textup{(}this distribution does not depend on~$n$\textup{)}.
\end{property}

By Resing~\cite{Resing93}, Property~\ref{pty:branching} implies that the sequence 
\[
\{\vc{Q}(t^\ssn)\}_{n \in \zplus}
\]
forms an~MTBP with immigration in state~$\vo$. In the rest of the section, we introduce a number of objects associated with this MTBP and discuss some of its properties.

The random vector $\check{L}_i$ mentioned in Property~\ref{pty:branching} we call the {\it visit offspring of a~queue~$i$ customer}. Define also the {\it visit duration at~queue~$i$} to be a~random variable $V_i $ equal in distribution to $t_{i+1}^\ssn - t_i^\ssn$ given that $Q_i(t_i^\ssn) = 1$, and the {\it session offspring of a~queue~$i$ customer} to be a~random vector $\vc{L}_i = (L_{i,1}, \ldots, L_{i,I})$ that has the distribution of $\vc{Q}(t^{(n+1)})$ given that $\vc{Q}(t^\ssn) = \vc{e}_i$. Then the immigration distribution is given by
\[
G(\vc{k}) := \pr \{ \vc{Q}(t^{(n+1)}) = \vc{k} | \vc{Q}(t^\ssn) = \vc{0} \} = \sum_{i=1}^I \lmb_i \pr \{ \vc{L}_i = \vc{k} \} / \sum_{i=1}^I \lmb_i, \quad \vc{k} \in \zplus^I.
\]

The following lemma computes the mean values
\[
\gamma_i := \ex V_i, \quad 
\check{\vc{m}}_i = (\check{m}_{i,1}, \ldots, \check{m}_{i,I}) := \ex \check{\vc{L}}_{i}, \quad 
\vc{m}_i = (m_{i,1}, \ldots, m_{i,I}) := \ex \vc{L}_i.
\]

\begin{lemma} \label{lem:mean}
For all~$i$, 
\[
\check{m}_{i,i} = \ex (\lmb_i / \mu_i)^{X_i} \quad \text{and} \quad \gamma_i =  \dfrac{1 - \check{m}_{i,i}}{\mu_i - \lmb_i},
\]
and, for $i \neq j$,
\[
\check{m}_{i,j} = \lmb_j \gamma_i.
\]
For the $m_{i,j}$'s, there is a recursive formula:
\[
m_{I,j} = \check{m}_{I,j} \quad \text{for all~$j$}, 
\]
and, for $i = 1, \ldots, I-1$, $\vc{m}_i$ is computed via $\vc{m}_{i+1}$,
\[
m_{i,j} = \check{m}_{i,j} \ind \{ i \geq j \} + \sum_{k=i+1}^I \check{m}_{i,k} m_{k,j} \quad \text{for all~$j$}.
\]
\end{lemma}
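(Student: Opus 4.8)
Write $\rho_i := \lmb_i/\mu_i$, so that $\rho_i < 1$ by Assumption~\ref{ass:load}. The plan is to read off $\check{m}_{i,i}$, $\gamma_i$ and $\check{m}_{i,j}$ ($j\neq i$) from the within-visit branching structure of Property~\ref{pty:branching}, and then to propagate these per-visit means through a session to obtain the recursion for $\vc{m}_i$. Since, by Property~\ref{pty:branching}, the law of $\check{\vc{L}}_i$ does not depend on~$n$, we may compute $\check{\vc{m}}_i$ and $\gamma_i$ on a \emph{canonical visit}: start queue~$i$ with one customer and all other queues empty, draw the gating index $X_i$ independently, let the server perform the visit, and record the final state $\check{\vc{L}}_i$ and the visit length $V_i$. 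Inside this visit, split the queue-$i$ customers into generations --- generation~$0$ is the initial customer and, for $k\geq 1$, generation~$k$ consists of the queue-$i$ customers arriving while a generation-$(k-1)$ customer is being served. Let $N_k$ denote the size of generation~$k$. Then $N_0=1$ and, conditionally on $N_k$ and the earlier generations, $N_{k+1}$ is a sum of $N_k$ i.i.d.\ terms, each being the number of queue-$i$ arrivals during one service time and hence of mean $\lmb_i\ex B_i=\rho_i$; thus $\ex[N_{k+1}\mid N_k]=\rho_i N_k$ and inductively $\ex N_k=\rho_i^{\,k}$. The $k$-th gating of queue~$i$ picks up exactly generation~$k-1$, so on $\{X_i=x\}$ with $x$ finite the generations $0,\dots,x-1$ are served and generation~$x$ is left behind, while on $\{X_i=\infty\}$ the visit is a busy period of a stable $M/G/1$ queue and leaves queue~$i$ empty.

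Hence $\check{L}_{i,i}=N_{X_i}$ (with $N_\infty:=0$), and since $X_i$ is independent of the arrivals and service times,
\[
\check{m}_{i,i}=\ex\bigl[\ex[N_{X_i}\mid X_i]\bigr]=\ex\bigl[\rho_i^{X_i}\bigr],
\]
where we use $\rho_i^{\infty}=0$, consistent with $\rho_i<1$. For the visit length, $V_i$ equals the total service requirement of the served customers, i.e.\ of generations $0,\dots,X_i-1$; taking expectations (the service times of generation~$k$ are independent of $N_k$, which is a function of the earlier generations, and $X_i$ is independent of everything) gives
\[
\gamma_i=\ex V_i=\frac{1}{\mu_i}\sum_{k\geq 0}\rho_i^{\,k}\,\pr(X_i>k)=\frac{1}{\mu_i}\cdot\frac{1-\ex[\rho_i^{X_i}]}{1-\rho_i}=\frac{1-\check{m}_{i,i}}{\mu_i-\lmb_i},
\]
the middle step being the elementary rearrangement $\sum_{k\geq 0}\rho_i^{\,k}\pr(X_i>k)=(1-\rho_i)^{-1}\bigl(1-\ex[\rho_i^{X_i}]\bigr)$; equivalently, this is the flow balance that customers entering queue~$i$ equal customers departing plus customers left behind, namely $1+\lmb_i\gamma_i=\mu_i\gamma_i+\check{m}_{i,i}$. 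In particular $\gamma_i\leq(\mu_i-\lmb_i)^{-1}<\infty$, so all the means above are finite.

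For $j\neq i$, queue~$j$ is not served during visit~$i$, so $\check{L}_{i,j}$ is just the number of queue-$j$ arrivals during the canonical visit. Its length $V_i$ is a function of the queue-$i$ arrivals, the queue-$i$ service times and $X_i$ only, hence independent of the Poisson process $E_j(\cdot)$; conditioning on $V_i$ therefore gives $\check{m}_{i,j}=\ex[\lmb_j V_i]=\lmb_j\gamma_i$.

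It remains to derive the recursion for $\vc{m}_i=\ex\vc{L}_i$ by propagating the $\check{\vc{m}}_k$ through the $I$ visits of a session. Begin session~$n$ with $\vc{Q}(t^\ssn)=\vc{e}_i$: the visits to queues $1,\dots,i-1$ are empty and instantaneous, visit~$i$ turns the single queue-$i$ customer into $\check{\vc{L}}_i$, and visits $i+1,\dots,I$ follow. By Property~\ref{pty:branching} and linearity of expectation, a customer sitting in a queue at the instant that queue is polled contributes to the end-of-session population, additively and in expectation, exactly the amount it would contribute were it the lone initial customer of a session polling that queue --- the visits performed before it is polled leave it untouched, and the visits performed afterwards act on it precisely as in a fresh session. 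Therefore a queue-$k$ customer appearing among $\check{\vc{L}}_i$ with $k>i$ (so queue~$k$ is still to be visited) contributes $\vc{m}_k$ in expectation, while a queue-$j$ customer among $\check{\vc{L}}_i$ with $j\leq i$ is never re-polled and contributes $\vc{e}_j$. Taking expectations over $\check{\vc{L}}_i$ yields
\[
m_{i,j}=\check{m}_{i,j}\,\ind\{i\geq j\}+\sum_{k=i+1}^{I}\check{m}_{i,k}\,m_{k,j}\qquad\text{for all }j,
\]
which for $i=I$ reduces to $m_{I,j}=\check{m}_{I,j}$ (no later visits, $\vc{L}_I=\check{\vc{L}}_I$); the recursion is then solved downward from $i=I$ to $i=1$. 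One may also phrase this last step as composing the $I$ affine \emph{mean-transition} maps of the successive visits. The step that needs genuine care is precisely this propagation argument: one must verify, from the branching property, that the presence of other customers and the interleaved empty or foreign visits do not disturb a given customer's forward contribution --- the marginal computations above are then routine bookkeeping and summation (and the whole lemma rests on Property~\ref{pty:branching}, whose own proof is the deeper input).
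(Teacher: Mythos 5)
Your proof is correct and follows essentially the same route as the paper: compute $\check{m}_{i,i}$ and $\gamma_i$ from the within-visit branching structure averaged over $X_i$, get $\check{m}_{i,j}=\lmb_j\gamma_i$ from $\check{L}_{i,j}\cop E_j(V_i)$, and obtain the $m_{i,j}$ recursion by propagating the end-of-visit population through the remaining visits of the session (the paper writes this as an explicit distributional identity, its equation for $L_{i,j}$). The only presentational difference is that you unroll the visit into generations and sum $\rho_i^k\pr(X_i>k)$ directly, whereas the paper sets up the recursion $V_i(k+1)\cop B_i+\sum_{l=1}^{E_i(B_i)}V_i^{(l)}(k)$ in the gating index and iterates it — the same geometric series either way.
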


The proof follows in Section~\ref{sec:lem_mean}.

By the Perron-Frobenius theorem (see e.g. \cite[Theorem~5.1]{Harris}), the {\it mean session offspring matrix $M := \{ m_{i,j} \}_{i,j=1}^I$} has a~positive eigenvalue $\rho$ that is greater in absolute value than any other eigenvalue of $M$. The eigenspace associated to $\rho$ is one-dimensional and parallel to a~vector with all coordinates positive. Then there exist vectors $\vc{u} = (u_1, \ldots, u_I)$ and $\vc{v} = (v_1, \ldots, v_I)$ with all coordinates positive such that 
\[
M \vc{u}^T = \rho \vc{u}^T, \quad \vc{v} M = \rho \vc{v} \quad \text{and} \quad \vc{v} \vc{u}^T = 1.
\]

Now introduce an~auxiliary MTBP $\{ \vc{Z}^\ssn \}_{n \in \zplus}$ with no immigration and such that, given $\vc{Z}^\ssn = \vc{e}_i$, the next generation $\vc{Z}^{(n+1)}$ is equal in distribution to~$\vc{L}_i$. Denote by $q_i$ the {\it extinction probability} for the process $\{ \vc{Z}^\ssn \}_{n \in \zplus}$ given that $\vc{Z}^{(0)} = \vc{e}_i$, and introduce the vector of extinction probabilities
\[
\vc{q} := (q_1, \ldots, q_I).
\]

Then the probability for the process $\{\vc{Q}(t^{(n)})\}_{n \in \zplus}$ to return to~$\vo$ is given by
\[
q_G := \sum_{\vc{k} \in \zplus^I} G(\vc{k}) \vc{q}^\vc{k}.
\]

\begin{remark}
Since all time instants $t$ such that $\vc{Q}(t) = \vo$ are contained among the $t^\ssn$'s, the probability for the process $\vc{Q}(\cdot)$ to return to~$\vo$ equals $q_G$, too.
\end{remark}

By Assumption~\ref{ass:load}, the MTBP's $\{ \vc{Q}(t^\ssn) \}_{n \in \zplus}$ and $\{ \vc{Z}^\ssn \}_{n \in \zplus}$ are supercritical (the proof is postponed to the Appendix).
\begin{lemma} \label{lem:supercritical}
For the Perron-Frobenius eigenvalue $\rho$ and the extinction probabilities~$q_i$, we have $\rho > 1$ and $q_i < 1$ for all~$i$. By the latter, $q_G <1$, too.
\end{lemma}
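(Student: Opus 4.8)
The plan is to establish $\rho > 1$ first, and then deduce $q_i < 1$ from supercriticality together with the standard extinction-probability criterion for MTBP's. The key is that the mean session offspring matrix $M$ is intimately tied, through the recursion in Lemma~\ref{lem:mean}, to the per-visit mean matrix and to the arrival rates, and the overload condition $\sum_i \lmb_i/\mu_i > 1$ should translate into $\rho > 1$.

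First I would make the connection between $M$ and a cleaner object explicit. From Lemma~\ref{lem:mean}, $\check m_{i,i} = \ex(\lmb_i/\mu_i)^{X_i} \in (0,1)$ (strictly less than $1$ since $\lmb_i/\mu_i < 1$ and $X_i \geq 0$ can be positive; note we need $\pr\{X_i \geq 1\} > 0$, otherwise the queue is never served — this should be excluded implicitly or handled separately) and $\check m_{i,j} = \lmb_j \gamma_i$ for $j \neq i$, with $\gamma_i = (1 - \check m_{i,i})/(\mu_i - \lmb_i) > 0$. A natural route is to consider the vector $\vc{1}$ and compare $\vc{1} M$ (or $M\vc{1}^T$) with $\vc{1}$ componentwise, since for a nonnegative irreducible matrix the Perron root satisfies $\min_i (M\vc{1}^T)_i \leq \rho \leq \max_i (M\vc{1}^T)_i$, and $\rho > 1$ follows if one can exhibit a positive vector $\vc{x}$ with $M\vc{x}^T \geq \vc{x}^T$ and strict inequality somewhere (subinvariance in reverse). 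The cleanest candidate for such a test vector is one built from the arrival rates, e.g. $x_i = 1/\lmb_i$ or $x_i$ proportional to expected work; the recursive structure of the $m_{i,j}$'s (where $\vc{m}_i$ aggregates contributions of all later queues $k > i$ within the session) suggests that summing $\sum_j \lmb_j m_{i,j}$ telescopes nicely. Concretely, I expect that the quantity $\sum_{j} (\lmb_j/\mu_j)$-type weightings, when pushed through one session, get multiplied by a factor that is $\geq \sum_i \lmb_i/\mu_i$ in an appropriate averaged sense, forcing $\rho \geq \sum_i \lmb_i/\mu_i > 1$; alternatively one argues directly that if $\rho \leq 1$ the embedded queue-length MTBP would be (sub)critical, hence the system would be stable, contradicting the overload Assumption~\ref{ass:load} via a rate-of-growth / fluid heuristic made rigorous by the law of large numbers for the total workload $\sum_i \lmb_i/\mu_i - 1 > 0$ per unit time.

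Once $\rho > 1$ is in hand, the statement $q_i < 1$ for all $i$ is the classical dichotomy for supercritical MTBP's: an irreducible supercritical (nonsingular) multitype branching process has extinction probability strictly less than $1$ from every single-particle initial state (see e.g. Harris, Theorem in Ch.~II, or Athreya--Ney). I would invoke this, after checking the two hypotheses it needs: irreducibility of $M$ — which holds because $\check m_{i,j} = \lmb_j\gamma_i > 0$ for $j\neq i$ makes the per-visit mean matrix strictly positive off the diagonal, and the session recursion then makes $M$ itself irreducible (in fact one should check $m_{i,j} > 0$ for all $i,j$, which follows since each session visits every queue) — and non-singularity / non-degeneracy, i.e. the process is not a pure deterministic splitting; this is automatic here since service times and arrivals are genuinely random. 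Finally $q_G = \sum_{\vc{k}} G(\vc{k})\vc{q}^{\vc{k}} < 1$ is immediate: $G$ is a probability distribution on $\zplus^I$, it assigns positive mass to at least one $\vc{k} \neq \vo$ (because each $\vc{L}_i$ has at least one positive coordinate with positive probability — a served customer may generate arrivals, or the served queue itself remains nonempty), and on that $\vc{k}$ we have $\vc{q}^{\vc{k}} \leq \max_i q_i < 1$, so the convex combination is $< 1$.

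The main obstacle is the first step: turning the scalar overload inequality $\sum_i \lmb_i/\mu_i > 1$ into the spectral inequality $\rho > 1$. The per-queue stability $\lmb_i/\mu_i < 1$ is what makes each $\gamma_i$ finite and positive, but it is the \emph{aggregate} overload that must drive the Perron root above $1$, and the bookkeeping is complicated by (i) the multigated indices $X_i$ entering only through $\check m_{i,i} = \ex(\lmb_i/\mu_i)^{X_i}$, and (ii) the asymmetric, order-dependent recursion defining $\vc{m}_i$ from $\vc{m}_{i+1}$. I expect the right device is to identify an explicit left or right Perron-type test vector with a probabilistic meaning — most likely $v_i$ or $u_i$ interpretable as expected total work generated — so that $\vc{v}M\vc{1}^T$ or a similar pairing collapses to an expression in $\lmb_i/\mu_i$; failing a slick choice, one falls back on the fluid/stability contradiction argument, which is robust but requires citing or reproving that $\rho \leq 1$ would make $\{\vc{Q}(t^\ssn)\}$ positive recurrent.
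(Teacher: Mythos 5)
Your proposal is split into a primary route (exhibit a Perron test vector showing $\rho>1$ directly from the matrix $M$) and a fallback (a stability contradiction). The primary route has a genuine gap: you never produce the test vector, and you acknowledge as much. The conjectured inequality $\rho \geq \sum_i \lmb_i/\mu_i$ is neither proved nor obviously true from the row-sum bounds you cite --- e.g.\ for two symmetric exhaustive queues one row sum of $M$ is $r=\lmb/(\mu-\lmb)$ and the other is $r^2$, so $\min_i(M\vone^T)_i>1$ is not a consequence of the overload condition alone without further argument, and the order-dependent recursion for $\vc{m}_i$ makes the ``telescoping'' you hope for far from automatic. As written, the spectral half of the argument does not close.

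The fallback you mention in one sentence is in fact the paper's entire proof, and it is both shorter and weaker in its requirements than you suggest: no positive recurrence of $\{\vc{Q}(t^\ssn)\}$ is needed. Argue by contradiction: if $\rho\leq 1$, then by the classical criticality dichotomy (Harris, Theorem~7.1) $q_i=1$ for all $i$, hence $q_G=1$, i.e.\ the embedded chain returns to $\vo$ almost surely; by the regenerative structure at $\vo$ it therefore returns infinitely often, and the successive emptying epochs tend to infinity a.s.\ (the gaps dominate i.i.d.\ exponentials with rate $\sum_i\lmb_i$). But the SLLN gives $\bigl(\sum_{i=1}^I\sum_{k=1}^{E_i(t)}B_i^\ssk - t\bigr)/t \to \sum_i\lmb_i/\mu_i-1>0$ a.s., so the workload cannot vanish at arbitrarily large times --- contradiction, whence $\rho>1$, and the same theorem then yields $q_i<1$ for all $i$ and $q_G<1$. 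Your checks of irreducibility ($\check m_{i,j}=\lmb_j\gamma_i>0$ for $j\neq i$, provided $\pr\{X_i\geq 1\}>0$) and non-degeneracy are sensible hygiene for invoking that dichotomy, and your closing observation for $q_G<1$ is fine; but to make the lemma stand you must commit to and write out the contradiction argument rather than leave both routes as sketches.
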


Assumption~\ref{ass:log} guarantees finiteness of the corresponding moments for the offspring distribution of the MTBP's $\{ \vc{Q}(t^\ssn) \}_{n \in \zplus}$ and $\{ \vc{Z}^\ssn \}_{n \in \zplus}$ (see Section~\ref{sec:lem_log} for the proof).

\begin{lemma} \label{lem:log}
For all~$i$ and~$j$, $\ex L_{i,j} \log L_{i,j} < \infty$, where $0 \log 0 := 0$ by convention.
\end{lemma}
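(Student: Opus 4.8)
The plan is to prove the slightly more general statement that $\ex h(L_{i,j})<\infty$ for every convex, nondecreasing, regularly varying function $h$ with $h(0)=0$ and $\ex h(B_i)<\infty$ — which covers $h(x)=x\log^+x$ (agreeing with $x\log x$ on $\zplus$, and with $\ex h(B_i)<\infty$ guaranteed by Assumption~\ref{ass:log}) as well as powers, and is the source of the ``power and logarithmic functions'' remark; for any such $h$ one has $h(cx)\le C(c)h(x)+C(c)$ for all $c>0$. I would argue in three stages. \emph{Stage 1 (reduce to the visit offspring).} Exactly as in the derivation of the recursion of Lemma~\ref{lem:mean} — in a session started by one queue-$i$ customer the visits to queues $1,\dots,i-1$ are instantaneous, the visit to queue~$i$ replaces the customer by $\check{\vc L}_i$, and each of the $\check L_{i,k}$ customers then deposited in a queue $k>i$ independently generates a copy of $\vc L_k$ over the remaining visits — one gets the distributional identity
\[
L_{i,j}\;\cop\;\check L_{i,j}\,\ind\{j\le i\}\;+\;\sum_{k=i+1}^{I}\sum_{\ell=1}^{\check L_{i,k}}\wtil L_{k,j}^{(\ell)},
\]
with the $\wtil{\vc L}_k^{(\ell)}$ i.i.d.\ copies of $\vc L_k$, mutually independent and independent of $\check{\vc L}_i$. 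Downward induction on $i$ (base case $i=I$, where $L_{I,j}=\check L_{I,j}$) then reduces the lemma to $\ex h(\check L_{i,k})<\infty$ for all $i,k$, using in addition a standard moment bound for randomly stopped sums (if $\ex h(M),\ex M<\infty$ and the $Y_\ell$ are i.i.d., nonnegative, independent of $M$, with $\ex h(Y),\ex Y<\infty$, then $\ex h(\sum_{\ell\le M}Y_\ell)<\infty$) together with the finiteness of all the first moments in Lemma~\ref{lem:mean}.

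\emph{Stage 2 (reduce to the $M/G/1$ busy period).} Fix $i$ and run the $X_i$-gated visit to queue~$i$ started with one customer on the same probability space as the busy period $\Theta_i$ of the exhaustive queue, using the same arrivals and service times. Since each of the at most $X_i$ gated rounds is work-conserving, the visit ends no later than the exhaustive busy period, so $V_i\le\Theta_i$ pathwise, and the set of customers appearing during the visit is contained in the set of customers served in the exhaustive busy period, of which there are $N_i$ (a.s.\ finite since $\lmb_i/\mu_i<1$); hence $\check L_{i,i}\le N_i$. For $k\ne i$ the variable $\check L_{i,k}$ is the number of rate-$\lmb_k$ Poisson arrivals over $V_i$, which is independent of $V_i$ (the latter depends only on the queue-$i$ arrivals and the queue-$i$ service times); using the elementary Poisson concentration bound $\ex h(\mathrm{Poisson}(\theta))\le C(1+h(\theta))$ and $V_i\le\Theta_i$, this gives $\ex h(\check L_{i,k})\le C(1+\ex h(\Theta_i))$. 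So Stage 2 leaves me with showing $\ex h(N_i)<\infty$ and $\ex h(\Theta_i)<\infty$.

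\emph{Stage 3 (busy-period moments).} Here the claimed estimate is produced. Write $r:=\lmb\ex B<1$ for the $M/G/1$ queue with arrival rate~$\lmb$, service time~$B$, $\ex h(B)<\infty$. The number $N_\Theta$ of customers served in a busy period is the total progeny of a subcritical Galton--Watson process whose offspring law is a $\mathrm{Poisson}(\lmb B)$ mixture; that offspring satisfies $\ex h(Z)\le C(1+\ex h(\lmb B))<\infty$ by the Poisson bound, and iterating the fixed-point identity $N_\Theta\cop 1+\sum_{j\le Z}N_\Theta^{(j)}$ and summing the resulting series — geometric in~$r$ — yields $\ex h(N_\Theta)<\infty$ with an explicit bound. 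For $\Theta$ itself I would either run the same scheme on Tak\'acs's equation $\Theta\cop B+\sum_{j\le Z}\Theta^{(j)}$ (truncate $h$, split off the leading term $B$ by convexity, control the random sum of the $\Theta^{(j)}$ using $\ex h(Z)<\infty$ and $\ex\Theta=1/(1-r)$, take expectations, and let the truncation tend to infinity), or, more cheaply, use the first-passage identity $\Theta=\sum_{m\le N_\Theta}B_m\le\sum_{m\le N_\Theta}A_m$ with $A_m$ i.i.d.\ exponential interarrival times and deduce $\ex h(\Theta)<\infty$ from $\ex h(N_\Theta)<\infty$, the exponential increments contributing only lower-order terms.

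\textbf{The hard part will be Stage 3}, specifically getting a clean moment transfer for the busy period at the logarithmic threshold: in the branching/tree picture the service times and the offspring counts are dependent — equivalently $\Theta$ and $N_\Theta$ are correlated — so the independent-random-sum bound used in Stage 1 does not apply to $\Theta$ directly. I expect the resolution to be exactly the route indicated above: push the \emph{length} $\Theta$ through the \emph{count} $N_\Theta$ via the majorant $\sum_{m\le N_\Theta}A_m$, reducing everything to one genuinely subcritical branching quantity whose offspring moment is manifestly finite, and then extract the quantitative estimate by tracking constants through the (at most $I$) alternating busy-period and Poisson-thinning steps and summing the geometric series generated by the stability conditions $\lmb_i/\mu_i<1$.
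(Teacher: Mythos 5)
Your proposal is correct in its overall architecture and matches the paper's: reduce the session offspring $L_{i,j}$ to the visit offspring $\check L_{i,j}$ by the recursion \eqref{eq24} and downward induction on~$i$, reduce the visit offspring to moments of the $M/G/1$ busy period (using that the multigated visit is dominated pathwise by the exhaustive one), and establish the busy-period moments via the branching fixed-point equation. The genuine divergence is in your Stage~3, where you prove finiteness of $\ex h(N_\Theta)$ for the \emph{count} first and then transfer to the \emph{length}~$\Theta$; the paper goes in the opposite order: it proves $\ex f(\tau_i)<\infty$ for the length first, by showing that the $k$-gated visit durations $\tau_i(k)\uparrow\tau_i$ have uniformly bounded $f$-moments via the recursion $\tau_i(k+1)\cop B_i^\sso+\sum_{l\le E_i(B_i^\sso)}\tau_i(k)^\ssl$, the convexity bound of Lemma~\ref{lem:convex}, and the splitting $f(xy)\le xf(y)+yf(x)$, and only then obtains the count $1+E_i(\tau_i)$ and the cross-queue offspring $E_j(V_i)$ from Lemma~\ref{lem:Ef(tau)}, which is tailored precisely to evaluating a renewal process at a \emph{dependent} random time. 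Your Takács-equation alternative for $\Theta$ is in substance the paper's argument, so that branch of your plan is safe.

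The one step you should not treat as routine is the transfer $\ex h(N_\Theta)<\infty\Rightarrow\ex h(\Theta)<\infty$ via the majorant $\Theta\le\sum_{m\le N_\Theta}A_m$. The inequality itself is correct (since $N_\Theta=1+E_i(\Theta)$, the $N_\Theta$-th arrival falls after the busy period ends), but $N_\Theta$ is determined by the same arrival process as the $A_m$'s, so the independent-random-sum bound you use in Stage~1 does not apply here either --- exactly the dependence problem you identify for the $B_m$'s resurfaces for the $A_m$'s. Dismissing it as ``lower-order terms'' hides the need for a quantitative statement: you would have to prove the mirror image of Lemma~\ref{lem:Ef(tau)} (bounding $\ex f(\sum_{m\le N}A_m)$ by $\ex f(N)$ for a dependent integer time~$N$), which goes through by the same exponential Chebyshev/union-bound splitting on the event that the partial sums $\sum_{m\le n}A_m$ exceed $c'n$, using that $f$ grows subexponentially. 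This is fixable but is real work, and it is precisely the work the paper's choice of ordering avoids.
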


Finally, we quote the Kesten-Stigum theorem for supercritical MTBP's (see e.g. \cite{KestenStigum66, KestenStigum96}), which is our starting point when proving the convergence results of the next section. By that theorem and Lemmas~\ref{lem:supercritical} and~\ref{lem:log}, the auxiliary process~$\{ \vc{Z}^\ssn \}_{n \in \zplus}$ has the following asymptotics.
\begin{proposition} \label{th:Kesten_Stigum}
Given $\vc{Z}^{(0)} = \vc{e}_i$,
\[
\vc{Z}^\ssn / \rho^n \to \zeta_i \vc{v} \quad \text{a.s.} \quad \text{as $n \to \infty$}, 
\]
where the distribution of the random variable~$\zeta_i$ has a~jump of magnitude $q_i<1$ at~$0$ and a~continuous density function on~$\pos$, and $\ex \zeta_i = u_i$. 
\end{proposition}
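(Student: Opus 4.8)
The plan is to verify that the auxiliary process $\{\vc{Z}^\ssn\}_{n \in \zplus}$ satisfies the hypotheses of the Kesten--Stigum theorem and then to match the three features claimed for $\zeta_i$ with the quantities that theorem delivers. Those hypotheses are supercriticality, positive regularity and non-singularity, and a finite $x\log x$ offspring moment. Supercriticality, $\rho > 1$, is Lemma~\ref{lem:supercritical}. The moment condition $\ex L_{i,j}\log L_{i,j} < \infty$ for all $i,j$ is Lemma~\ref{lem:log}. Positive regularity (primitivity) of the mean session offspring matrix $M$ is exactly the Perron--Frobenius property recorded just before the statement: $\rho$ is a simple eigenvalue strictly dominating all others in modulus, with strictly positive left and right eigenvectors $\vc{v}$ and $\vc{u}$, which is equivalent to $M^k$ being entrywise positive for some $k$. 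Non-singularity holds because the offspring law is genuinely random --- the service time $B_i$ is a positive random variable, so the visit offspring size $|\check{\vc{L}}_i|$ of a queue-$i$ customer is not a.s.\ constant, and in particular it is not the case that every individual has exactly one child of a deterministic type.

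Granting the hypotheses, the Kesten--Stigum theorem applied to $\{\vc{Z}^\ssn\}$ started from $\vc{Z}^\sso = \vc{e}_i$ produces a nonnegative random variable $\zeta_i$ with $\vc{Z}^\ssn/\rho^n \to \zeta_i\vc{v}$ both a.s.\ and in $L^1$. Taking expectations and using $\ex[\vc{Z}^\ssn \mid \vc{Z}^\sso = \vc{e}_i] = \vc{e}_i M^n$ together with $M^n/\rho^n \to \vc{u}^T\vc{v}$ (here $\vc{v}\vc{u}^T = 1$) gives $\ex\zeta_i = \vc{e}_i\vc{u}^T = u_i$. For the atom at the origin, one invokes the standard fact that, for a supercritical positively-regular non-singular MTBP with finite $x\log x$ offspring moments, $\{\zeta_i = 0\}$ agrees up to a null set with the extinction event of $\{\vc{Z}^\ssn\}$ started from $\vc{e}_i$; hence $\pr(\zeta_i = 0) = q_i$, which is strictly less than $1$ by Lemma~\ref{lem:supercritical}.

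The remaining claim --- that the law of $\zeta_i$ on $\pos$ is absolutely continuous with a continuous density, so that the full distribution of $\zeta_i$ is the atom $q_i$ at $0$ plus a continuous density on $\pos$ --- is the one piece not contained in the bare Kesten--Stigum limit, and this is where the cited references do the work. It is obtained from the analysis of the functional (fixed-point) equation satisfied by the Laplace transform of the limit variable for non-singular supercritical (multitype) branching processes; see \cite{KestenStigum66, KestenStigum96} and the standard treatments of branching-process limit laws. I would import this conclusion by reference. Accordingly, I expect the only real difficulty to be bookkeeping: matching the precise regularity and non-singularity requirements of the version of the Kesten--Stigum theorem being quoted, and citing the density-continuity statement in a form valid in the multitype setting --- there is no genuine computation left, it having been carried out in Lemmas~\ref{lem:mean}--\ref{lem:log}.
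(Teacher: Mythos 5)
Your proposal matches the paper exactly: the paper gives no proof of Proposition~\ref{th:Kesten_Stigum} beyond quoting the Kesten--Stigum theorem and noting that Lemmas~\ref{lem:supercritical} and~\ref{lem:log} supply supercriticality and the $x\log x$ moment condition, which is precisely your plan (you merely spell out the positive-regularity/non-singularity bookkeeping and the identification $\ex\zeta_i=u_i$ a bit more explicitly). This is correct and requires no further comment.
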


\section{Fluid limit} \label{sec:fluid_limit}

In this section, we present out main result which concerns the behavior of the model under study on a~large time scale. 

For each~$n \in \zplus$, introduce the scaled queue length process
\begin{equation} \label{eq:ovvq}
\ovvq^\ssn(t) := \vc{Q}(\rho^n t) / \rho^n, \quad t \in \nneg.
\end{equation}
We are interested in the a.s. limit of the processes~\eqref{eq:ovvq} as $n \to \infty$, which we call the~{\it fluid limit} of the model. It appears that, in order to precisely describe the fluid limit, the information provided by the following theorem is sufficient.

For $n \in \mathbb{Z}$, let
\[
\eta_n :=
\left\{
\begin{array}{ll}
 \min \{ k \colon t^\ssk \geq \rho^n \} & \text{if $n \geq 0$}, \\
0 & \text{if $n < 0$}.
\end{array}
\right.
\]
\begin{theorem} \label{th:eta_n}
There exist constants  $\ovb_i \in \pos$ and $\ovva_i = (\ova_{i,1}, \ldots, \ova_{i,I}) \in \nneg^I$, $i = 1, \ldots, I+1$, and a~random variable~$\xi$ with values in $[1,\rho)$ such that, for all $k \in \zplus$ and~$i$, \begin{equation} \label{eq16}
t_i^{(\eta_n + k)} / \rho^n \to  \rho^k \ovb_i \xi \quad \text{and} \quad 
\vc{Q}(t_i^{(\eta_n + k)}) / \rho^n \to \xi \rho^k \ovva_i  \quad \text{a.s. as $n \to \infty$}.
\end{equation}
The $\ovb_i$'s and $\ovva_i$'s are given by
\begin{equation} \label{eq:ovb_i}
\ovb_1 = 1, \quad \ovb_{i+1} = \ovt_i + (v_i / \alpha + \lmb_i(\ovb_i - \ovb_1)) \gamma_i, \quad i = 1, \ldots, I,
\end{equation}
and
\begin{equation} \label{eq:ova_i}
\ovva_1 = v/\alpha \quad \ovva_{i+1} = \ovva_i + (\ovb_{i+1} - \ovb_i) \boldsymbol{\lmb} - (\ovb_{i+1} - \ovb_i) \mu_i \vc{e}_i, \quad i = 1, \ldots, I,
\end{equation}
where
\[
\alpha = \frac{\sum_{i=1}^I v_i / \mu_i}{\sum_{i=1}^I \lmb_i / \mu_i - 1}.
\]

The distribution of $\xi$ is given by
\begin{align*}
\pr \{ \xi \geq x \} = \frac{1}{1 - q_G} \sum_{\begin{subarray}{l} \vc{k} \in \zplus^I, \\ |\vc{k}| \geq 1 \end{subarray}} G(\vc{k}) 
\sum_{ \begin{subarray}{l} \vc{l} \leq \vc{k}, \\ |\vc{l}| \geq 1 \end{subarray}} \binom{\vc{k}}{\vc{l}}(\vone - \vc{q})^\vc{l} \vc{q}^{\vc{k}-\vc{l}} \times& \\
\times \pr \{ \{ \log_\rho (\alpha \sum_{i=1}^I \sum_{j=1}^{l_i} \xi_i^{(j)}) \}  \geq  \log_\rho x\}&, \quad x \in [1,\rho),
\end{align*}
where $\xi_i^\ssj$, $j \in \ntr$, are i.i.d.\ random variables with the distribution of $\zeta_i$ given that $\zeta_i > 0$, and the sequences $\{ \xi_i^\ssj \}_{j \in \ntr}$, $i = 1, \ldots, I$, are mutually independent.
\end{theorem}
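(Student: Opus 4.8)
\emph{Proof proposal.} The plan is to combine the almost sure Kesten--Stigum asymptotics of the embedded MTBP with several strong laws of large numbers, in four stages: (a) determine the a.s.\ limit of $\vc{Q}(t^{(n)})/\rho^n$; (b) propagate it across a session to get the limits at the polling epochs $t_i^{(n)}$ and the recursions \eqref{eq:ovb_i}--\eqref{eq:ova_i}; (c) describe the random index shift $\eta_n-n$; (d) read off the law of~$\xi$. For (a): as recalled above, $\{\vc{Q}(t^{(n)})\}_{n\in\zplus}$ is an MTBP with immigration in state~$\vo$ (Property~\ref{pty:branching}), supercritical with $q_G<1$ (Lemma~\ref{lem:supercritical}); between consecutive sojourns at~$\vo$ sit i.i.d.\ excursions, each avoiding~$\vo$ forever with probability $1-q_G>0$, so a.s.\ there is a last return to~$\vo$, at some session index~$\tau$, and $\vc{Q}(t^{(\tau+1)})$ has the law of the session offspring $\vc{k}$ (distribution~$G$) conditioned on non-extinction of the ensuing immigration-free MTBP. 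I would split the particles present at $t^{(\tau+1)}$ into immortal ones (a type-$i$ particle being immortal with probability $1-q_i$, independently, so the immortal configuration $\vc{l}\le\vc{k}$ with $|\vc{l}|\ge1$ has conditional probability proportional to $\binom{\vc{k}}{\vc{l}}(\vone-\vc{q})^{\vc{l}}\vc{q}^{\vc{k}-\vc{l}}$) and doomed ones: the doomed lines die out and contribute nothing after normalisation, while by Proposition~\ref{th:Kesten_Stigum} each immortal type-$i$ particle produces a line whose $\rho^{-m}$-normalised generation sizes converge a.s.\ to $\xi_i\vc{v}$, $\xi_i\cop(\zeta_i\mid\zeta_i>0)$, these limits mutually independent. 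Summing, $\vc{Q}(t^{(n)})/\rho^n\to\Theta\vc{v}$ a.s.\ with $\Theta:=\rho^{-\tau-1}W>0$ a.s.\ and $W:=\sum_{i=1}^I\sum_{j=1}^{l_i}\xi_i^{(j)}$; the conditional law of $\vc{l}$ just described is the origin of the outer double summation (over $\vc{k}$ weighted by $G(\vc{k})$, over $\vc{l}$ by the binomial factor, normalised by $1-q_G$) in the formula for~$\xi$.

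\emph{Stage (b).} Starting from $\vc{Q}(t^{(n)})/\rho^n\to\Theta\vc{v}$, I would run an induction over the visits $i=1,\dots,I$ of a session to get $t_i^{(n)}/\rho^n\to\alpha\Theta\,\ovb_i$ and $\vc{Q}(t_i^{(n)})/\rho^n\to\alpha\Theta\,\ovva_i$ a.s. For large~$n$ the system is non-empty at $t^{(n)}$, so $t_1^{(n)}=t^{(n)}$, $\vc{Q}(t_1^{(n)})=\vc{Q}(t^{(n)})$, whence $\ovb_1=1$ and $\ovva_1=\vc{v}/\alpha$. In the inductive step, since queue~$i$ is served only during its own visit, $\ova_{i,i}=v_i/\alpha+\lmb_i(\ovb_i-\ovb_1)$ is its initial population plus the arrivals accumulated over visits $1,\dots,i-1$; the visit to queue~$i$ then lasts $\sim Q_i(t_i^{(n)})\,\gamma_i$ by the SLLN, with $\gamma_i=\ex V_i$ from Lemma~\ref{lem:mean}, giving $\ovb_{i+1}-\ovb_i=\ova_{i,i}\gamma_i$, i.e.\ \eqref{eq:ovb_i}; and during that visit each queue~$j$ gains $\sim\lmb_j\cdot(\text{visit length})$ Poisson arrivals while queue~$i$ loses $\sim\mu_i\cdot(\text{visit length})$ departures (the server busy at unit rate on queue~$i$ throughout; SLLN for $E_j$ and for the service times), i.e.\ \eqref{eq:ova_i}. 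Telescoping $t^{(n)}=\sum_{r<n}(t^{(r+1)}-t^{(r)})$ against $\vc{Q}(t^{(r)})\sim\rho^r\Theta\vc{v}$ gives $t^{(n)}/\rho^n\to\alpha\Theta$; equivalently, the requirements $\ovb_{I+1}=\rho$, $\ovva_{I+1}=\rho\,\vc{v}/\alpha$ forced by consistency with the next session are what pin down $\alpha=(\sum_i v_i/\mu_i)/(\sum_i\lmb_i/\mu_i-1)$ (a work-balance computation: weighting the vector identity for $\ovva_{I+1}$ by $1/\mu_j$ and summing).

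\emph{Stages (c)--(d).} By Proposition~\ref{th:Kesten_Stigum} the $\zeta_i$ have continuous densities on $\pos$, so $W$ and $\Theta$ have continuous laws and $\log_\rho(\alpha\Theta)\notin\mathbb{Z}$ a.s. From $t^{(n)}/\rho^n\to\alpha\Theta$ it follows that a.s.\ $\eta_n=n+c$ for all large~$n$, where $c:=\lceil-\log_\rho(\alpha\Theta)\rceil$ is the unique integer with $\rho^{c-1}<1/(\alpha\Theta)<\rho^{c}$; put $\xi:=\alpha\Theta\,\rho^{c}=\rho^{\{\log_\rho(\alpha\Theta)\}}\in[1,\rho)$. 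Applying stage~(b) with $n$ replaced by $\eta_n+k=n+c+k$ gives $t_i^{(\eta_n+k)}/\rho^n=\rho^{c+k}\,(t_i^{(n+c+k)}/\rho^{n+c+k})\to\rho^{c+k}\alpha\Theta\,\ovb_i=\rho^k\ovb_i\xi$, and likewise $\vc{Q}(t_i^{(\eta_n+k)})/\rho^n\to\rho^k\ovva_i\xi$, which is \eqref{eq16}. Finally, $\alpha\Theta=\rho^{-\tau-1}\alpha W$ differs from $\alpha W$ by an integer power of~$\rho$, so $\{\log_\rho(\alpha\Theta)\}=\{\log_\rho(\alpha W)\}$ and, conditionally on $\vc{l}$, $\pr\{\xi\ge x\mid\vc{l}\}=\pr\{\{\log_\rho(\alpha\sum_{i=1}^I\sum_{j=1}^{l_i}\xi_i^{(j)})\}\ge\log_\rho x\}$; averaging over $\vc{k}\sim G$ with $|\vc{k}|\ge1$ and over $\vc{l}$ with the stage-(a) weights, and dividing by $1-q_G$, yields the asserted distribution of~$\xi$.

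\emph{Main obstacle.} The crux is joining stages (a) and (b): lifting the discrete-time a.s.\ convergence of the embedded MTBP --- delicate already because of the immigration bookkeeping and the immortal/doomed split --- to a.s.\ convergence of the continuous-time trajectory. In stage~(b) the SLLN errors must be controlled uniformly over populations growing like $\rho^n$ and over the finitely many visits per session, which I expect is cleanest via functional strong laws for the counting processes followed by composition; one must also verify the continuity of the law of $\Theta$ that makes the shift $\eta_n-n$ eventually constant.
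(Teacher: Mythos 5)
Your proposal follows essentially the same route as the paper: the last-return/immortal-line decomposition with the Kesten--Stigum theorem to get $\vc{Q}(t^{(n)})/\rho^n\to\zeta\vc{v}$ (the paper's Lemma~\ref{lem:Q(t^n)}), a work-conservation identity and within-session laws of large numbers to obtain $t_i^{(n)}/\rho^n\to b_i\zeta$ and the recursions for $\ovb_i,\ovva_i$ (Lemma~\ref{lem:t_i^n}, where the paper handles the random-index sums via Statement~\ref{st:LLN}), and the observation that $n-\eta_n$ is eventually constant equal to $\lf\log_\rho(\alpha\zeta)\rf$ by continuity of the law of $\zeta$, which yields $\xi=\rho^{\{\log_\rho(\alpha\zeta)\}}$ and its distribution. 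The sketch is correct in structure and identifies the same constants and the same formula for the law of $\xi$.
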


The proof of Theorem~\ref{th:eta_n} combines the Kesten-Stigum theorem with various dynamic equations and laws of large numbers, see Section~\ref{sec:proof_fluid_limit}.

\begin{remark}
Since $t_{I+1}^\ssn = t_1^\ssn$, we also have
\[
\ovb_{I+1} = \rho \ovb_1 \quad \text{and} \quad \ovva_{I+1} = \rho \ovva_1.
\]
\end{remark}

\begin{remark}
There is an~alternative way to compute the $\ova_i$'s:
\[
\ovva_1 = v, \quad \ovva_{i+1} = \ovva_i - \ovva_{i,i} \vc{e}_i + \ova_{i,i} \check{\vc{m}}_i, \quad i = 1, \ldots, I,
\]
which implies that $\ova_{i,j} > 0$ if $|i - j| \neq 1$ and $\ova_{i,i+1} = 0$ if and only if the service discipline at~queue~$i$ is exhaustive. See Lemma~\ref{lem:t_i^n} and Remark~\ref{rem:a_ova} in Section~\ref{sec:preliminary}.
\end{remark}

Based on the results of Theorem~\ref{th:eta_n}, Theorem~\ref{th:fluid_limit} below derives the fluid limit equations from the suitable dynamic equations, see Section~\ref{sec:proof_fluid_limit} for the proof.

\begin{theorem} \label{th:fluid_limit}
There exists a~deterministic function $\ovvq(\cdot) = (\ovq_1, \ldots, \ovq_I)(\cdot) \colon \nneg \to \nneg^I$ such that,
\[
\text{a.s. as $n \to \infty$}, \quad \ovvq^\ssn(\cdot) \to \xi \ovvq (\cdot / \xi) \quad \text{uniformly on compact sets},
\]
where the random variable $\xi$ is defined in Theorem~\ref{th:eta_n}.

The function $\ovvq(\cdot)$ is continuous and piecewise linear and given by
\begin{equation} \label{eq:ovQ_v1}
\ovvq(t)=\left\{
\begin{array}{ll}
0 & \text{if $t = 0$} \\
\rho^k \ovva_i + (t - \rho^k \ovb_i) \boldsymbol{\lmb} - (t - \rho^k \ovb_i) \mu_i \vc{e}_i & \text{if $t \in [\rho^k \ovb_i, \rho^k \ovb_{I+1})$, $i = 1, \ldots, I$, $k \in \mathbb{Z}$},
\end{array}
\right.
\end{equation}
or, equivalently, by
\begin{equation} \label{eq:ovQ_v2}
\ovq_i(t)=\left\{
\begin{array}{ll}
0 & \text{if $t = 0$}, \\
\rho^k \ova_{i,i} + (\lmb_i - \mu_i)(t - \rho^k \ovb_i)& \text{if $t \in [\rho^k \ovb_i, \rho^k \ovb_{i+1})$, $k \in \mathbb{Z}$}, \\
\rho^{k+1} \ova_{i,i} - \lmb_i (\rho^{k+1} \ovb_i - t)& \text{if $t \in [\rho^k \ovb_{i+1}, \rho^{k+1} \ovb_i)$, $k \in \mathbb{Z}$},
\end{array}
\right.
\quad i = 1, \ldots,I.
\end{equation}
\end{theorem}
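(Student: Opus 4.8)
The plan is to derive Theorem~\ref{th:fluid_limit} from Theorem~\ref{th:eta_n} by interpolating between the skeleton points $t_i^{(\eta_n+k)}$ where convergence is already known. First I would fix a compact time interval $[0,T]$ and a realization in the almost-sure event of Theorem~\ref{th:eta_n}. On that event, for any $k$ the scaled polling instants $t_i^{(\eta_n+k)}/\rho^n$ converge to $\rho^k\ovb_i\xi$, and the scaled queue lengths $\vc{Q}(t_i^{(\eta_n+k)})/\rho^n$ converge to $\xi\rho^k\ovva_i$. Between two consecutive polling instants of a given session --- i.e.\ during visit~$n+k$ to queue~$i$, namely the interval $[t_i^{(\eta_n+k)},t_{i+1}^{(\eta_n+k)})$ --- the server works on queue~$i$ only, so $Q_i$ decreases at rate $\mu_i-\lmb_i$ while each other queue $Q_j$ grows at its arrival rate $\lmb_j$. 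I would make this precise with the elementary dynamic equations $Q_i(t) = Q_i(t_i^\ssn) - (\text{departures from }i) + (\text{arrivals to }i)$ and the functional SLLN for the Poisson arrival processes $E_j$ and for the renewal (departure) process governed by the $B_i^\ssn$. Dividing by $\rho^n$, using the already-established convergence of the endpoints and endpoint values, and invoking the SLLN uniformly on $[0,T]$, one obtains that on each such visit interval the rescaled trajectory converges to the affine function $\rho^k\ovva_i + (t/\xi - \rho^k\ovb_i)(\boldsymbol\lmb - \mu_i\vc{e}_i)$, rescaled by $\xi$ in time and space --- which is exactly formula~\eqref{eq:ovQ_v1} evaluated at $t/\xi$ and multiplied by $\xi$.

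Next I would patch these pieces together. Since $[0,T]$ is covered, up to the near-origin behavior, by finitely many consecutive visit intervals $[\rho^k\ovb_i\xi,\rho^k\ovb_{i+1}\xi)$ (as $k$ ranges over a finite set of integers and $i$ over $1,\ldots,I$, using $\ovb_{I+1}=\rho\ovb_1$ from the first Remark after Theorem~\ref{th:eta_n} so the pieces chain correctly from session to session), and since the limiting affine pieces agree at shared endpoints by the recursions~\eqref{eq:ovb_i}--\eqref{eq:ova_i}, the limit is a well-defined continuous piecewise-linear function; this is the function $\ovvq(\cdot)$ defined by~\eqref{eq:ovQ_v1}. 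Continuity at the endpoints, and the consistency $\ovvq(\rho t) = \rho\,\ovvq(t)$ (self-similarity), both follow from the Remarks after Theorem~\ref{th:eta_n}. The equivalence of~\eqref{eq:ovQ_v1} and~\eqref{eq:ovQ_v2} is then a bookkeeping computation: on $[\rho^k\ovb_i,\rho^k\ovb_{i+1})$ queue~$i$ is in service, giving the slope $\lmb_i-\mu_i$ and starting value $\rho^k\ova_{i,i}$; on the complementary stretch $[\rho^k\ovb_{i+1},\rho^{k+1}\ovb_i)$ queue~$i$ is not in service, so it only grows at rate $\lmb_i$ from the value it had when last left, which is $\rho^{k+1}\ova_{i,i} - \lmb_i(\rho^{k+1}\ovb_i - t)$ after using $\ovb_{I+1}=\rho\ovb_1$ and $\ovva_{I+1}=\rho\ovva_1$. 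One should also check $\ovq_i\ge 0$: the value at the end of a service stretch is $\rho^k\ova_{i+1,i}\ge 0$ by~\eqref{eq:ova_i}, and the function is affine with the stated slopes in between, so it stays nonnegative.

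The step I expect to be the main obstacle is handling the neighborhood of $t=0$, where the server switches infinitely often and infinitely many visit intervals accumulate. One cannot cover $[0,T]$ by finitely many pieces down to $0$; instead I would argue as follows. Fix $\eps>0$. By self-similarity of the candidate limit and by~\eqref{eq16} with negative $k$, the rescaled trajectory on $[0,\eps]$ is controlled by $|\ovvq^\ssn(t)| \le |\ovvq^\ssn(\rho^{k_0}\ovb_1\xi)| + (\text{total arrivals on }[0,\rho^{k_0}\ovb_1\xi])/\rho^n$ for a suitably negative $k_0$, and $|\vc Q(t^{(\eta_n+k_0)})|/\rho^n \to \xi\rho^{k_0}|\ovva_1|$ which is small for $k_0$ very negative since $|\ovva_1|=|v|/\alpha>0$ is a fixed constant and $\rho>1$. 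Thus both $\ovvq^\ssn$ and the limit are uniformly small on $[0,\eps]$ for $n$ large, giving uniform convergence on all of $[0,T]$. A secondary technical point is making the SLLN convergences uniform in the (random, $n$-dependent) time windows; this is standard --- one passes through the uniform SLLN for the Poisson processes and for the partial sums of service times on the fixed horizon $[0,\rho^{k}\ovb_{I+1}\xi + 1]$ and composes with the convergent endpoints --- but it needs to be stated carefully. Finally, that $\ovvq(\cdot)$ is deterministic (only the time/space scaling $\xi$ is random) is immediate from~\eqref{eq:ovQ_v1}, since the $\ovb_i$'s and $\ovva_i$'s are the deterministic constants supplied by Theorem~\ref{th:eta_n}.
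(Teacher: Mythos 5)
Your proposal is correct in substance and follows the paper's route for the pointwise analysis --- fixing a realization in the a.s.\ event of Theorem~\ref{th:eta_n}, locating $\rho^n t$ inside a visit interval $[t_i^{(\eta_n+k)},t_{i+1}^{(\eta_n+k)})$ or a non-service stretch, and passing to the limit in the dynamic equations $Q_i = E_i - B_i\circ I_i$ via the SLLN; the equivalence of \eqref{eq:ovQ_v1} and \eqref{eq:ovQ_v2} and the continuity check are likewise the same bookkeeping the paper does. Where you genuinely diverge is the upgrade from pointwise to uniform convergence. You propose a direct covering argument: finitely many visit intervals cover $[\eps,T]$, with uniform SLLN estimates on each, plus a smallness bound near the accumulation point $t=0$. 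The paper instead uses a shorter monotonicity trick: it writes $D_i(\rho^n\cdot)/\rho^n = E_i(\rho^n\cdot)/\rho^n - \ovq_i^\ssn(\cdot)$, observes that the scaled departure processes are monotone and converge pointwise to a continuous limit (hence uniformly on compacts, by the Dini--P\'olya argument), and subtracts the uniformly convergent arrival processes. The paper's device buys you exactly the two things you flag as delicate: it dispenses with the uniformity-in-random-windows issue and with any separate treatment of the boundary points $t=\rho^k\ovb_i\xi$ and of the neighborhood of the origin (the paper still handles the boundary points pointwise, as its Case~3, which your sketch omits). Your covering argument does work, but two details need tightening: near the origin the correct bound is simply $|\vc{Q}(s)|\le\sum_i E_i(s)$ (the system starts empty), so $\sup_{t\le\eps}|\ovvq^\ssn(t)|$ is controlled by the functional SLLN for the arrivals alone --- the inequality you wrote, referencing the value at $\rho^{k_0}\ovb_1\xi$, runs backwards in time and is not the bound you want; and at the interval boundaries you must control the oscillation of $\ovvq^\ssn$ over the shrinking random windows between $t_i^{(\eta_n+k)}/\rho^n$ and its limit, which again reduces to an arrivals-plus-departures estimate but should be stated.
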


\begin{remark} \label{rem:optimal_computation}
By~\eqref{eq:ovQ_v2}, the whole process $\ovvq(\cdot)$ is defined by the constants $\ovb_i$ and $\ova_{i,i}$. The fastest way to compute the $\ovb_i$'s and $\ova_{i,i}$'s is using the simultaneous recursion
\[
\ovb_1 = 1, \quad \ova_{i,i} = v_i/\alpha + \lmb_i(\ovb_i - \ovb_1), \quad \ovb_{i+1} = \ovb_i + \ova_{i,i} \gamma_i, \quad i = 1, \ldots, I.
\]
See the last part of the proof of Lemma~\ref{lem:t_i^n} (namely, \eqref{eq13} and \eqref{eq14}) and Remark~\ref{rem:a_ova} in Section~\ref{sec:preliminary}.
\end{remark}

Finally, Fig.~\ref{fig:fluid_limit} depicts a trajectory of the limiting process $\xi \ovq(\cdot / \xi)$.
\begin{figure}[!htb] 
\centering
\includegraphics[scale=0.7]{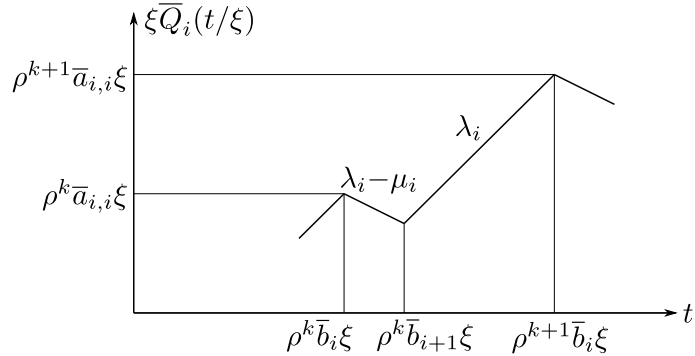}
\caption{Fluid limit of queue~$i$}
\label{fig:fluid_limit}
\end{figure}

\section{Proofs for Section~\ref{sec:MTBP}} \label{sec:proof_MTBP}

Here we prove the properties of the offspring distribution of the embedded MTBP~$\{\vc{Q}(t^\ssn)\}_{n \in \zplus}$.

\subsection{Proof of Lemma~\ref{lem:mean}} \label{sec:lem_mean}
First we compute the $\gamma_i$'s. For $k \in \zplus \cup \{ \infty \}$, let a~random variable $V_i(k)$ be the visit duration at queue~$i$ given that the service discipline at queue~$i$ is $k$-gated. 

Recall that the gating index equal $\infty$ corresponds to exhaustive service, and hence
\[
\ex V_i(\infty) = 1 / (\mu_i - \lmb_i).
\] 
Now note that 
\begin{equation} \label{eq20}
V_i(0) = 0 \quad \text{and}  \quad V_i(k+1) \cop B_i + \sum_{l=1}^{E_i(B_i)} V_i^\ssl(k), \quad k \in \zplus.
\end{equation}
where the random elements $B_i$, $E_i(\cdot)$ and $\{ V_i^\ssl(k) \}_{l \in \ntr}$ are mutually independent, and $V_i^\ssl(k)$, $l \in \ntr$, are i.i.d.\ copies of $V_i(k)$. Then, for $k \in \zplus$,
\begin{equation} \label{eq22}
\begin{split}
\ex V_i(k+1) &= \frac{1}{\mu_i}+\frac{\lmb_i}{\mu_i} \ex V_i(k) 
= \frac{1}{\mu_i} \left( 1+\frac{\lmb_i}{\mu_i} \right) + \left( \frac{\lmb_i}{\mu_i} \right)^2 \ex V_i(k-1) = \ldots \\
&= \frac{1}{\mu_i} \left( 1+\frac{\lmb_i}{\mu_i} + \ldots + \left( \frac{\lmb_i}{\mu_i} \right)^k \right) + \left( \frac{\lmb_i}{\mu_i} \right)^{k+1} \ex V_i(0)
= \frac{1}{\mu_i} \frac{1 - (\lmb_i/\mu_i)^{k+1}}{1 - \lmb_i/\mu_i},
\end{split}
\end{equation}
and
\[
\gamma_i = \sum_{k \in \zplus \cup \{ \infty \}} \pr\{ X_i = k \} \ex V_i(k) = \frac{1 - \ex (\lmb_i/\mu_i)^{X_i}}{\mu_i - \lmb_i}.
\]

In a~similar way, we compute the $\check{m}_{i,i}$'s. For $k \in \zplus \cup \{ \infty \}$, let a~random variable $\check{L}_{i,i}(k)$ be the queue~$i$ visit offspring of a~queue~$i$ customer given that the service discipline at queue~$i$ is $k$-gated. Since 
\[
\check{L}_{i,i}(\infty) = 0, \quad \check{L}_{i,i}(0) = 1 \quad \text{and} \quad \check{L}_{i,i}(k+1) \cop \sum_{l=1}^{E_i(B_i)} \check{L}_{i,i}^\ssl(k), \quad k \in \zplus,
\]
where the random elements $B_i$, $E_i(\cdot)$ and $\{ \check{L}^\ssl_{i,i}(k) \}_{l \in \ntr}$ are mutually independent, and $\check{L}^\ssl_{i,i}(k)$, $l \in \ntr$, are i.i.d.\ copies of $\check{L}_{i,i}(k)$, we have
\[
\ex \check{L}_{i,i}(k+1) = (\lmb_i / \mu_i) \ex \check{L}_{i,i}(k) = \ldots = (\lmb_i / \mu_i)^{k+1}, \quad  k \in \zplus,  \quad \text{and} \quad \check{m}_{i,i} = \ex (\lmb_i/\mu_i)^{X_i}.
\]

The formulas for the $\check{m}_{i,j}$'s, $i \neq j$, and the $m_{i,j}$'s follow, respectively, by the representations
\begin{equation} \label{eq23}
\check{L}_{i,j} \cop E_j(V_i), \quad i \neq j,
\end{equation}
where $V_i$ and $E_j(\cdot)$ are independent, and
\begin{equation} \label{eq24}
L_{i,j} \cop \left\{
\begin{array}{rl}
\check{L}_{i,j} + \sum_{l=1}^{\check{L}_{i,i+1}} L_{i+1,j}^\ssl + \ldots + \sum_{l=1}^{\check{L}_{i,I}} L_{I,j}^\ssl, & i \geq j,\\
\sum_{l=1}^{\check{L}_{i,i+1}} L_{i+1,j}^\ssl + \ldots + \sum_{l=1}^{\check{L}_{i,I}} L_{I,j}^\ssl, &i < j,
\end{array}
\right.
\end{equation}
where $L_{i,j}^\ssl$, $l \in \ntr$, are i.i.d.\ copies of $L_{i,j}$, and the sequences $\{ L_{i,j}^\ssl \}_{l \in \ntr}$, $i,j = 1, \ldots, I$, are mutually independent and do not depend on the vectors $\check{L}_i$, $i = 1,\ldots,I$. \qed

\subsection{Proof of Lemma~\ref{lem:log}} \label{sec:lem_log}
The cornerstone of this proof is finiteness of the corresponding moments for the busy periods of the queues in isolation, which we check with the help of the auxiliary Lemmas~\ref{lem:Ef(tau)} and~\ref{lem:convex} that follow below together with their proofs.

\begin{lemma} \label{lem:Ef(tau)}
Suppose that a~function $f(\cdot) \colon \nneg \to \nneg$ is bounded in a~finite~interval $[0,T]$ and non-decreasing in~$[T, \infty)$, and that $f(x) \to \infty$ as $x \to \infty$. Suppose also that, for some (and hence for all) $c > 1$,
\begin{equation} \label{eq:log_1}
\limsup_{x \to \infty} f(cx) / f(x) < \infty.
\end{equation}
Consider an i.i.d.~sequence $\{ Y^\ssn \}_{n \in \ntr}$ of non-negative, non-degenerate at zero random variables, and the renewal process 
\[
Y(t) = \max\{n \in \zplus \colon \sum_{k=1}^n Y^\ssk \leq t  \}, \quad t \in \nneg.
\]
Let $\tau$ be a non-negative random variable which may depend on the sequence $\{Y_n\}_{n \in \ntr}$. Assume that  ${\mathbf E} f(\tau )<\infty$. Then ${\mathbf E} f(Y(\tau ))$ is finite too.
\end{lemma}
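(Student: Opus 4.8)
The plan is to bound $f(Y(\tau))$ above by something of the form $C \cdot f(\tau) + (\text{random variable with finite expectation})$ and then take expectations. The starting point is the growth condition~\eqref{eq:log_1}: choosing $c = 2$, there are constants $C_0 \geq 1$ and $x_0 \geq T$ with $f(2x) \leq C_0 f(x)$ for all $x \geq x_0$. Iterating, $f(2^j x_0) \leq C_0^j f(x_0)$, which is the quantitative form of the statement that $f$ grows no faster than a power of its argument on the dyadic scale.

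The key probabilistic input is that, since the $Y^\ssn$ are non-negative and non-degenerate at zero, we have $\pr\{Y^\ssn \geq \delta\} =: p > 0$ for some $\delta > 0$. Then, conditionally on $\tau$, the renewal count $Y(\tau)$ is stochastically dominated by the number of $\delta$-or-larger gaps we can fit, but more usefully: the event $\{Y(\tau) \geq m\}$ requires $\sum_{k=1}^m Y^\ssk \leq \tau$. First I would handle the regime where $\tau$ is large. On the event $\{\tau \geq t\}$, split the first $m$ partial sums; a standard large-deviations / Chernoff bound for sums of i.i.d.\ non-negative variables non-degenerate at zero gives $\pr\{\sum_{k=1}^m Y^\ssk \leq t\} \leq e^{-\theta m}$ once $m \geq \beta t$ for suitable constants $\theta, \beta > 0$ (depending only on the law of $Y$). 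Consequently, conditionally on $\tau = t$ with $t$ large, $Y(\tau)$ has exponential tails around its typical value $\approx \beta t$, so $f(Y(\tau))$ is, up to the dyadic growth bound on $f$, comparable to $f(\beta t) \leq f(\max(\beta,1)\, t) \leq C_0^{\lceil \log_2 \max(\beta,1)\rceil} f(t)$ for $t \geq x_0$, plus an exponentially small correction for the event that $Y(\tau)$ overshoots its typical value by a large multiplicative factor. Because $f$ on the dyadic scale is dominated by $C_0^j$ while the overshoot probability decays like $e^{-\theta \beta t 2^{j}}$ or so, the sum over dyadic overshoot levels converges and contributes only a bounded (integrable) amount. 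Concretely I would write
\[
\ex\, f(Y(\tau)) \;\le\; \ex\big[ f(Y(\tau))\,\ind\{\tau \le x_1\}\big] \;+\; \ex\big[ f(Y(\tau))\,\ind\{\tau > x_1\}\big],
\]
with $x_1$ chosen past $x_0$ and past the range where the concentration estimate kicks in. The first term is bounded because on $\{\tau \le x_1\}$ we have $Y(\tau) \le Y(x_1)$, and $\ex f(Y(x_1))$ is finite (indeed $Y(x_1)$ has exponential moments, and $f$ has at most polynomial growth). For the second term, condition on $\tau$, use the concentration bound to write $f(Y(\tau)) \le C\, f(\tau) + R$ where $\ex R < \infty$ uniformly, and integrate against the law of $\tau$, using $\ex f(\tau) < \infty$.

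The main obstacle, and the step requiring the most care, is the interaction between $f$'s growth and $Y(\tau)$'s upper tail: we must ensure that the \emph{multiplicative} fluctuations of $Y(\tau)$ above its mean, once passed through $f$, are still summable. This is exactly where condition~\eqref{eq:log_1} is essential and cannot be dropped — it is what guarantees $f(2^j x)/f(x)$ grows only geometrically in $j$, so it is beaten by the doubly-fast decay of $\pr\{Y(\tau) \ge 2^j \beta \tau \mid \tau\}$. A secondary technical point is that $\tau$ may depend on the whole sequence $\{Y^\ssn\}$, so the concentration bound must be applied as a deterministic inequality valid for \emph{every} fixed threshold $t$ simultaneously (i.e.\ a statement about the process $t \mapsto Y(t)$, not about $Y$ evaluated at an independent time); one routes around the dependence by bounding $\ind\{Y(\tau)\ge m\}\le \ind\{\exists\, t: \tau\ge t,\ Y(t)\ge m\}$ and then using monotonicity of $t\mapsto Y(t)$ to reduce to $\ind\{Y(\tau)\ge m,\ \tau\in[\text{dyadic block}]\}$ on which $\tau$ is pinned up to a factor $2$. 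Apart from that, everything is routine: choose the constants $\delta, p, \theta, \beta$ from the law of $Y$, choose $C_0, x_0$ from~\eqref{eq:log_1}, and assemble the two-term bound above.
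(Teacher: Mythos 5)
Your proposal is correct and rests on the same two ingredients as the paper's proof: the exponential Chebyshev bound for the lower tail of the renewal partial sums, $\pr \{ \sum_{k=1}^m Y^\ssk \leq c'm \} \leq (e^{c'} \, \ex e^{-Y^{(1)}})^m$, which is exactly where non-degeneracy at zero enters, and the iterated doubling bound on $f$ coming from~\eqref{eq:log_1} (the paper extracts from it the consequence $\log f(x)/x \to 0$, you the polynomial bound $f(2^j x_0) \leq C_0^j f(x_0)$; either suffices against the exponential decay). The difference is purely in the bookkeeping, but it is worth noting because it affects the one point you flag as delicate. The paper writes $\ex f(Y(\tau)) \leq \sum_{n} \pr\{ Y(\tau) \geq f^{(-1)}(n) \}$ via a layer-cake representation with the pseudo-inverse $f^{(-1)}$, uses the deterministic identity $\{ Y(\tau) \geq m \} = \{ \sum_{k=1}^m Y^\ssk \leq \tau \}$, and applies at each level the union bound $\{ \sum_{k=1}^m Y^\ssk \leq \tau \} \subseteq \{ \sum_{k=1}^m Y^\ssk \leq c'm \} \cup \{ \tau > c'm \}$ with $m = \lc f^{(-1)}(n) \rc$; the first family of events is summable by Chernoff plus the growth bound on $f$, and the second sums to at most $1 + \ex f(\tau/c') < \infty$. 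This sidesteps the dependence of $\tau$ on $\{ Y^\ssn \}$ entirely --- no conditioning on $\tau$ is ever performed --- whereas your route must patch exactly that point: ``condition on $\tau$'' taken literally is wrong here, and your dyadic-block repair (bounding $Y(\tau)$ by $Y(2^{j+1})$ on $\{ \tau \in [2^j, 2^{j+1}) \}$ and splitting over overshoot levels of $Y$) is workable but is the heaviest part of your argument. If you write this up in full, the paper's per-level union bound delivers the same conclusion with no conditioning at all.
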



{\it Proof
.} Without loss of generality, we can assume that the function $f (\cdot)$ is non-decreasing in the entire domain $[0,\infty)$ (otherwise, instead of $f(\cdot)$, one can consider $\wtil{f}(\cdot) = \sup_{0 \leq y \leq \cdot} f(y)$), and also that $f(\cdot)$ is right-continuous.

First we show that, if~\eqref{eq:log_1} holds for some $c > 1$, then it holds for any $c' > 1$. For $c' = c^k$, $k \in \ntr$, we have
\[
\limsup_{x \to \infty} \frac{f(c^k x)}{f(x)} \leq \limsup_{x \to \infty} \frac{f(c^k x)}{f(c^{k-1} x)} \limsup_{x \to \infty} \frac{f(c^{k-1} x)}{f(c^{k-2} x)} \ldots \limsup_{x \to \infty} \frac{f(c x)}{f(x)} < \infty.
\]
Then, for $c' > 1$ other than powers of~$c$, \eqref{eq:log_1} follows by the monotonicity of $f(\cdot)$.

Condition~\eqref{eq:log_1} also implies that
\begin{equation}
\lim_{x \to \infty} \log(f(x)) / x = 0. \label{eq:log_2}
\end{equation}
Indeed, in~\eqref{eq:log_1} take $c = e$, the exponent. Since $M := \limsup_{x \to \infty} f(e x) / f(x) < \infty$, there exists a~large enough $T' > 0$ such that $\sup_{x \in [T',\infty)} f(e x) / f(x) \leq 2M$.
Note that any $x \in [e T', \infty)$ admits a~unique representation $x = e^{k(x)} y(x)$, where $y(x) \in [T', e T')$ and $k(x) \in \ntr$. Hence, for any $x \in [e T', \infty)$,
\[
f(x) = \frac{f(e^{k(x)} y(x))}{f(e^{k(x)-1} y(x))} \frac{f(e^{k(x)-1} y(x))}{f(e^{k(x)-2} y(x))} \ldots \frac{f(e y(x))}{f(y(x))} f(y(x)) \leq (2M)^{k(x)} f(e T')
\]
and
\[
\frac{\log( f(x) )}{x} \leq \frac{k(x) \log (2M) + \log (f(e T'))}{T' e^{k(x)}},
\]
implying~\eqref{eq:log_2}.

Now define the pseudo-inverse function
\[
f^{(-1)}(y) := \inf \{ x \in [0,\infty) \colon f(x) \geq y \}, \quad y \in [0,\infty).
\]

For any $c' > 0$, we have
\begin{align*}
\ex f(Y(\tau)) &\leq \sum_{n \in \zplus} \pr \{ f(Y(\tau)) \geq n \} \\
&\leq \sum_{n \in \zplus} \pr \{ Y(\tau) \geq f^{(-1)}(n) \} \leq \sum_{n \in \zplus} \pr \{ \sum_{k=1}^{\lc f^{(-1)}(n) \rc} Y^\ssk \leq \tau \} \\
&\leq \underbrace{\sum_{n \in \zplus} \pr \{ \sum_{k=1}^{\lc f^{(-1)}(n) \rc} Y^\ssk \leq c' \lc f^{(-1)}(n) \rc \}}_{\displaystyle{=:\Sigma_1(c')}} + 
\underbrace{\sum_{n \in \zplus} \pr \{ c' f^{(-1)}(n) < \tau \}}_{\displaystyle{=:\Sigma_2(c')}}.
\end{align*}

By~condition~\eqref{eq:log_1}, $\ex f(\tau / c') < \infty$ for any $c' > 0$, and hence
\[
\Sigma_2(c') \leq \sum_{n \in \zplus} \pr \{ f(\tau / c') \geq n \} \leq 1 + \ex f(\tau / c') < \infty.
\]

We now pick a~$c'$ such that $\Sigma_1(c') < \infty$, and this will finish the proof. By Markov's inequality, $\pr \{ \sum_{k=1}^n Y^\ssk \leq c' n \} = \pr \{ e^{-\sum_{k=1}^n Y^\ssk} \geq e^{-c' n} \} \leq (e^{c'} \, \ex e^{-Y^{(1)}})^n$. Let $c'$ be small enough so that $\wtil{c} := e^{c'} \, \ex e^{-Y^{(1)}} < 1$. Since $\lc f^{(-1)}(n) \rc = m$ implies $n \leq f(m+1)$, we have
\begin{align*}
\Sigma_1(c') \leq \sum_{m \in \zplus} \pr \{ \sum_{k=1}^{m} Y^\ssk \leq c'm \} f(m+1) \leq \frac{1}{\wtil{c}} \sum_{m \in \ntr} \wtil{c}^{\,m} f (m).
\end{align*}
Take an~$\eps \in ( 0, | \log(\wtil{c}) |)$. By~\eqref{eq:log_2}, there exists a~large enough $N \in \ntr$ such that $f(m) \leq e^{m \eps}$ for $m > N$. Then
\begin{align*}
\Sigma_1(c') \leq \frac{1}{\wtil{c}} \sum_{m = 1}^N \wtil{c}^{\,m} f (m) + \frac{1}{\wtil{c}} \sum_{m = N+1}^\infty (\wtil{c} e^\eps)^{m},
\end{align*}
where $\wtil{c} e^\eps = e^{\eps - | \log(\wtil{c}) |} < 1$ by the choice of $\eps$, and hence $\Sigma_1(c') < \infty$. \qed

\begin{lemma} \label{lem:convex}
Consider a~sequence $\{ Y^\ssn \}_{n \in \ntr}$ of non-negative random variables that are identically distributed \textup{(}but not necessarily independent\textup{)}, and also a~$\zplus$-valued random variable $\eta$ that does not depend on~$\{ Y^\ssn \}_{n \in \ntr}$. If $f(\cdot) \colon \nneg \to \mathbb{R}$ is a~convex function, then 
\[
\ex f(\sum_{k=1}^\eta Y^\ssk) \leq \ex f(\eta Y^{(1)}).
\]
\end{lemma}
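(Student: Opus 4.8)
The plan is to prove Lemma~\ref{lem:convex} by conditioning on $\eta$ and applying Jensen's inequality to the convex function $f$. First I would fix a value $\eta = n \in \zplus$; since $\eta$ is independent of the sequence $\{Y^\ssn\}$, the conditional law of $(Y^{(1)}, \ldots, Y^{(n)})$ given $\eta = n$ is the same as its unconditional law. For the case $n = 0$ the sum is empty and equals $0 = n Y^{(1)}$, so there is nothing to prove; assume $n \geq 1$. Then I would write
\[
\frac{1}{n}\sum_{k=1}^{n} Y^\ssk \quad\text{as a convex combination of}\quad Y^{(1)}, \ldots, Y^{(n)}
\]
with equal weights $1/n$, and invoke Jensen's inequality for the convex function $f$:
\[
f\Bigl(\sum_{k=1}^{n} Y^\ssk\Bigr) = f\Bigl(\frac{1}{n}\sum_{k=1}^{n} n Y^\ssk\Bigr) \leq \frac{1}{n}\sum_{k=1}^{n} f(n Y^\ssk).
\]

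Next I would take expectations of both sides. Because the $Y^\ssk$ are identically distributed, $\ex f(n Y^\ssk) = \ex f(n Y^{(1)})$ for every $k$, so the right-hand side has expectation $\ex f(n Y^{(1)})$, giving
\[
\ex\Bigl[f\Bigl(\sum_{k=1}^{n} Y^\ssk\Bigr)\ \Big|\ \eta = n\Bigr] \leq \ex f(n Y^{(1)}) = \ex\bigl[f(\eta Y^{(1)})\ \big|\ \eta = n\bigr],
\]
where in the last equality I again used independence of $\eta$ and $Y^{(1)}$. Finally I would average over the distribution of $\eta$ (i.e., apply the tower property), which yields $\ex f(\sum_{k=1}^\eta Y^\ssk) \leq \ex f(\eta Y^{(1)})$, as claimed.

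The only genuine subtlety — and the step I would be most careful about — is integrability: Jensen's inequality in the form above requires that $f$ be genuinely convex on all of $\nneg$ (which is assumed) and that the relevant expectations make sense, i.e., that $\ex f(\eta Y^{(1)})$ is well-defined in $(-\infty, +\infty]$. A convex function on $\nneg$ is bounded below by an affine function, $f(x) \geq ax + b$, so $f(\sum_k Y^\ssk)^- $ and $f(\eta Y^{(1)})^-$ are both integrable provided $\ex[\eta Y^{(1)}]$ and $\ex[\sum_{k=1}^\eta Y^\ssk]$ are finite; if they are not, or if $\ex f(\eta Y^{(1)}) = +\infty$, the inequality holds trivially. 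I would briefly note this so that the conditional expectations and the final tower-property step are legitimate. No other obstacle arises; the argument is essentially one line of Jensen plus the identical-distribution and independence hypotheses.
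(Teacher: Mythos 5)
Your proof is correct and is essentially identical to the paper's: both condition on $\eta=n$, write $\sum_{k=1}^n Y^\ssk$ as $\tfrac1n\sum_{k=1}^n (nY^\ssk)$, apply convexity termwise, use identical distribution to collapse to $\ex f(nY^{(1)})$, and then average over $\eta$ using independence. Your added remark on integrability is a sensible precaution but does not change the argument.
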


{\it Proof.}
By the convexity of $f(\cdot)$, for any $n \in \zplus$,
\[
\ex f(\sum_{k=1}^n Y^\ssk) = \ex f(\sum_{k=1}^n \frac{1}{n} (n Y^\ssk)) \leq \sum_{k=1}^n \frac{1}{n} \ex f(n Y^\ssk) = \ex f(n Y^{(1)}).
\]
Then, by the independence between $\{ Y^\ssn \}_{n \in \ntr}$ and $\eta$,
\begin{align*}
\ex f(\sum_{k=1}^\eta Y^\ssk) &= \sum_{n \in \zplus} \pr \{ \eta = n \} f(\sum_{k=1}^n Y^\ssk)  \\
&\leq  \sum_{n \in \zplus} \pr \{ \eta = n \} \, \ex f(n Y^{(1)}) = \ex f(\eta Y^{(1)}). \tag*{\qed}
\end{align*}

Now we proceed with the proof of Lemma~\ref{lem:log}. It suffices to show that
\[
\ex f(L_{i,j}) < \infty, \quad \text{for all $i$ and $j$},
\]
where
\[
f(x) = \left\{
\begin{array}{ll}
0, & x \in [0,1], \\
x \log{x}, & x \in [1,\infty).
\end{array}
\right.
\]

Note that the function $f(\cdot)$ is convex: in $(1, \infty)$, its derivative $\log(\cdot)+1$ is non-decreasing, and in the other points, it is easy to check the definition of convexity.

Also note that
\begin{equation} \label{eq21}
f(x y) \leq x f(y) + y f(x), \quad x,y \in \nneg.
\end{equation}

The rest of the proof is divided into three parts. The two key steps are to show that the $f$-moments of the visit duration~$V_i$ and the same type visit offspring~$\check{L}_{i,i}$ are finite. Then the finiteness of the $f$-moments of the session offspring $L_{i,j}$ follows easily.

\paragraph{Finiteness of $\ex f(V_i)$}  It suffices to show that, in the $M/G/1$ queue with the arrival process $E_i(\cdot)$ and service times $B_i^\ssn$, $n \in \ntr$, the $f$-moment of the busy period is finite. Suppose that at time $t=0$, there is one customer in the queue, and his/her service time $B_i^\sso$ is equal in distribution to $B_i$ and is independent from $E_i(\cdot)$ and $\{ B_i^\ssn \}_{n \in \ntr}$. Let
\begin{gather*}
\tau_i = \min\{t \in \pos \colon \text{the queue is empty at $t$} \}, \\
\tau_i(0) = 0, \quad \tau_i(1) = B_i^\sso, \quad \tau_i(k+2) = \tau_i(k+1) + \sum_{n = E_i(\tau_i(k))+1}^{E_i(\tau_i(k+1))} B_i^\ssn, \quad k \in \zplus.
\end{gather*}
Whilst $\tau_i$ is a~busy period, $\tau_i(k)$ is equal in distribution to the visit duration in queue~$i$ of the polling system given that the service discipline in that queue is $k$-gated, and 
\[
\tau_i(k) \uparrow \tau_i \quad \text{a.s. as $k \to \infty$}.
\]
Now we show that the moments $\ex f(\tau_i(k))$, $k \in \zplus$, are bounded. Then the finiteness of $\ex f(\tau_i)$ follows by the continuity of $f(\cdot)$ and the dominated convergence theorem.

Mimicking~\eqref{eq20} , we have
\[
\tau_i(k+1) \cop B_i^\sso + \sum_{l=1}^{E_i(B_i^\sso)} \tau_i(k)^\ssl, \quad k \geq 1,
\]
where $\tau_i(k)^\ssl$, $l \in \ntr$, are i.i.d.\ copies of $\tau_i(k)$ that are independent from $B_i^\sso$ and $E_i(\cdot)$. Then, by the monotonicity and convexity of $f(\cdot)$, and the auxiliary Lemma~\ref{lem:convex} combined with~\eqref{eq21}, 
\begin{align*}
\ex f(\tau_i(k)) \leq \ex f(\tau_i(k+1)) &\leq \frac{1}{2} \ex f(2 B_i^\sso) + \frac{1}{2} \ex f(2 \sum_{l=1}^{E_i(B_i^\sso)} \tau_i(k)^\ssl) \\
& \leq \frac{1}{2} \ex f(2 B_i^\sso) + \frac{1}{2} \ex f(2 E_i(B_i^\sso) \tau_i(k)^{(1)}) \\
& \leq \frac{1}{2} \ex f(2 B_i^\sso) +\frac{\lmb_i}{\mu_i} \ex f(\tau_i(k)) + \frac{1}{2} \ex \tau_i(k) \ex f(2 E_i(B_i^\sso)),
\end{align*}
where $\ex f(2 E_i(B_i^\sso)) < \infty$ by the auxiliary Lemma~\ref{lem:Ef(tau)}, and $\ex \tau_i(k) \leq 1/(\mu_i - \lmb_i)$ by~\eqref{eq22}.

Thus, for all $k \geq 2$,
\[
\ex f(\tau_i(k)) \leq c / (1-\lmb_i/\mu_i),
\]
where
\[
c = \ex f(2 B_i^\sso)/2 + \ex f(2 E_i(B_i^\sso)) / (2(\mu_i - \lmb_i)) < \infty.
\]

\paragraph{Finiteness of $\ex f(\check{L}_{i,i})$} Note that $L_{ii}$ is bounded stochastically from above by the number of service completions during the busy period of the $M/G/1$ queue introduced when proving the finiteness of $\ex f(V_i)$. The number of service completions during the first busy period $\tau_i$ is given by $1+E_i(\tau_i)$, and the finiteness of $\ex f(1+E_i(\tau_i))$ follows by the auxiliary Lemma~\ref{lem:Ef(tau)}.

\paragraph{Finiteness of $\ex f(L_{i,j})$} This part of the proof uses mathematical induction. Now that we have shown the finiteness of the moments $\ex f(\check{L}_{i,i})$,~\eqref{eq23} and Lemma~\ref{lem:Ef(tau)} imply that
\begin{equation} \label{eq25}
\ex f(\check{L}_{i,j}) < \infty \quad \text{for all $i$ and $j$}.
\end{equation}
Then we have the basis of induction: $\ex f(L_{I,j}) = \ex f(\check{L}_{I,j}) < \infty$ for all $j$. Suppose that $\ex f(L_{k,j}) < \infty$ for $k = i+1, \ldots, I$ and all $j$. Then the induction step (from $i+1$ to $i$) follows by~\eqref{eq24}, the convexity of $f(\cdot)$, Lemma~\eqref{lem:convex} combined with~\eqref{eq21}, and~\eqref{eq25}.
\qed

\section{Proofs for Section~\ref{sec:fluid_limit}} \label{sec:proof_fluid_limit}
First we make preparations in Sections~\ref{sec:add_notation} and~\ref{sec:preliminary}, and then proceed with the proofs of Theorems~\ref{th:eta_n} and~\ref{th:fluid_limit} in Sections~\ref{sec:proof_th_eta_n} and~\ref{sec:proof_th_fluid_limit}, respectively.

\subsection{Additional notation} \label{sec:add_notation}
In this section we introduce a~number of auxiliary random objects that we operate with when proving the a.s. convergence results of the paper.

\paragraph{Queue length dynamics}
Define the renewal processes
\[
B_i(t) := \max \{ n \in \zplus \text{ such that } \sum_{k=1}^n B_i^\ssk \leq t \}, \quad t \in \nneg,
\]
and the processes
\[
I_i(t) := \int_0^t \ind \{ \text{queue~$i$ is in service at time~$s$} \} \, ds, \quad t \in \nneg,
\]
that keep track of how much time the server has spent in each of the queues.

Then the number of queue~$i$ customers that have departed up to time~$t$ is given by
\[
D_i(t) := B_i(I_i(t)).
\]
Most of the a.s. convergence results of the paper we derive from the basic equations
\begin{equation*} 
Q_i(\cdot) = E_i(\cdot) - D_i(\cdot).
\end{equation*}

The preliminary results of Section~\ref{sec:preliminary} depend on when the system empties for the last time. The number of indices $n$ such that $\vc{Q}(t^\ssn) = 0$ has a geometric distribution with parameter $q_G < 1$ (see Lemma~\ref{lem:supercritical}). Denote by $\nu$ the last such index, i.e. 
\[
\vc{Q}(t^{(\nu)}) = \vo \quad \text{and} \quad \vc{Q}(t^\ssn) \neq \vo \quad  \text{for all $n > \nu$}.
\]

\paragraph{Ancestor-descendant relationships between customers} By the following three rules, we define the binary relation ``{\it is a~descendant of}$\mkern 5mu$" on the set of~customers:
\begin{itemize}
\item each customer is a~descendant of him-/herself;
\item if customer~$2$ arrives while customer~$1$ is receiving service (the two customers are allowed to come from different queues), then customer~$2$ is a~descendant of customer~$1$;
\item if customer~$2$ is a~descendant of customer~$1$, and customer~$3$ a~descendant of customer~$2$, then customer~$3$ is a~descendant of customer~$1$.
\end{itemize}

Now suppose that a~customer is in position~$k$ in queue~$i$ at the beginning of visit~$n$ to queue~$i$. Denote by $V_i^{(n,k)}$  the amount of time during the visit that his/her descendants are in service, and by $\check{L}_{i,j}^{(n,k)}$ the number of his/her descendants in queue~$j$ at the end of the visit. If a~customer is in position~$k$ in queue~$i$ at the beginning of session~$n$, denote by $L_{i,j}^{(n,k)}$ the number of his/her descendants in queue~$j$ at the end of the session. Introduce also the random vectors 
\[
\check{\vc{L}}_i^{(n,k)} := (\check{L}_{i,1}^{(n,k)},\ldots,\check{L}_{i,I}^{(n,k)}) \quad \text{and} \quad 
\vc{L}_i^{(n,k)}  := (L_{i,1}^{(n,k)},\ldots,L_{i,I}^{(n,k)}).
\]

\subsection{Preliminary results} \label{sec:preliminary} In this section, we characterize the asymptotic behavior of the system at the switching instants~$t_i^\ssn$, laying the basis for Theorem~\ref{th:eta_n} that concerns the bigger scale times $t_i^{(\eta_n)}$. 

From the Kesten-Stigum theorem, we derive the following result for the~$t_1^\ssn$'s.

\begin{lemma} \label{lem:Q(t^n)}
There exists a~positive random variable~$\zeta$ such that
\[
\vc{Q}(t^\ssn) / \rho^n \to \zeta v \quad \text{and} \quad \vc{Q}(t_1^\ssn) / \rho^n \to \zeta v \quad \text{a.s.} \quad \text{as $n \to \infty$}.
\]
The distribution of $\zeta$ is given by
\begin{equation} \label{eq28}
\begin{split}
\pr \{ \zeta \geq x \} =& \frac{1}{1 - q_G} \sum_{n \in \zplus} \pr \{ \nu = n \} 
\sum_{\begin{subarray}{l} \vc{k} \in \zplus^I, \\ |\vc{k}| \geq 1 \end{subarray}} G(\vc{k}) \times \\
&\times \sum_{ \begin{subarray}{l} \vc{l} \leq \vc{k}, \\ |\vc{l}| \geq 1 \end{subarray}} \binom{\vc{k}}{\vc{l}}(\vone - \vc{q})^l \vc{q}^{\vc{k}-\vc{l}}
\pr \{ \sum_{i=1}^I \sum_{j=1}^{l_i}\xi_i^{(j)} \geq \rho^{n+1} x \}, \quad x \in \pos,
\end{split}
\end{equation}
where the random variables $\xi_i^{(j)}$ are the same as in Theorem~\ref{th:eta_n}.
\end{lemma}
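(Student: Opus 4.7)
The plan is to exploit the fact that, past the last return time $\nu$ to $\vo$, the process $\{ \vc{Q}(t^\ssn) \}$ evolves as a pure MTBP (without further immigration) started from the random state $\vc{Q}(t^{(\nu+1)})$, and then to apply the Kesten--Stigum theorem (Proposition~\ref{th:Kesten_Stigum}) lineage by lineage.

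First I would handle the $t_1^\ssn$ half of the claim: for all $n > \nu$ the system is non-empty at $t^\ssn$, so $t^\ssn = t_1^\ssn$ and $\vc{Q}(t^\ssn) = \vc{Q}(t_1^\ssn)$; the two convergences are therefore equivalent for large~$n$. Since $\nu$ is a.s.\ finite (the number of returns to $\vo$ is geometric with parameter $1-q_G > 0$ by Lemma~\ref{lem:supercritical}), it then suffices to prove the convergence for $\vc{Q}(t^\ssn)/\rho^n$ along $n = \nu+1+m$ as $m \to \infty$. I would condition on $\nu = n$ and on $\vc{k} := \vc{Q}(t^{(\nu+1)})$: the immigration into $\vo$ at session $n+1$ has law $G$, and the event $\{\nu = n\}$ says precisely that this immigration produces a progeny that never revisits $\vo$, which from state $\vc{k}$ has probability $1 - \vc{q}^\vc{k}$. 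Hence the conditional law of $\vc{k}$ given $\nu = n$ is $G(\vc{k})(1-\vc{q}^\vc{k})/(1-q_G)$ for $|\vc{k}| \geq 1$.

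Given $\vc{k}$, I would decompose its $|\vc{k}|$ initial lineages into extinct and surviving ones, letting $\vc{l} \leq \vc{k}$ count the surviving lineages of each type. Since each type-$i$ individual independently seeds an MTBP that goes extinct with probability $q_i$, the conditional law of $\vc{l}$ given $\vc{k}$ and $\{|\vc{l}| \geq 1\}$ is $\binom{\vc{k}}{\vc{l}}(\vone - \vc{q})^\vc{l} \vc{q}^{\vc{k}-\vc{l}}/(1-\vc{q}^\vc{k})$. Applying Proposition~\ref{th:Kesten_Stigum} to each lineage separately, extinct lineages contribute $0$ to $\vc{Q}(t^{(\nu+1+m)})/\rho^m$ in the limit, while each surviving type-$i$ lineage contributes an i.i.d.\ copy $\xi_i^\ssj$ of $\zeta_i$ conditioned on $\zeta_i > 0$; here one uses the standard fact, which is implicit in Proposition~\ref{th:Kesten_Stigum}, that the event $\{\zeta_i = 0\}$ coincides a.s.\ with extinction of the lineage. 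Summing over lineages and rescaling by $\rho^{-(\nu+1)}$,
\[
\vc{Q}(t^\ssn)/\rho^n \to \zeta \vc{v} \quad \text{a.s., with} \quad \zeta := \rho^{-(\nu+1)} \sum_{i=1}^I \sum_{j=1}^{l_i} \xi_i^\ssj,
\]
which is strictly positive because $|\vc{l}| \geq 1$ and each $\xi_i^\ssj > 0$.

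Formula~\eqref{eq28} then follows by multiplying the three conditional laws above (the factors $1 - \vc{q}^\vc{k}$ cancel between the laws of $\vc{k}$ and $\vc{l}$) and by rewriting the event $\{\zeta \geq x\}$ as $\{\sum_{i,j} \xi_i^\ssj \geq \rho^{n+1} x\}$ on $\{\nu = n\}$. The most delicate step I anticipate is the a.s.\ identification of extinction with $\{\zeta_i = 0\}$: without it one could not replace the surviving lineages' limits by i.i.d.\ copies of the conditional law of $\zeta_i$ on $\pos$, so the $\xi_i^\ssj$ appearing in the statement would not have the claimed distribution. The rest of the argument is bookkeeping of conditional probabilities.
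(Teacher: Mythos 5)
Your proposal is correct and follows essentially the same route as the paper: after the a.s.\ finite last visit $\nu$ to $\vo$, apply the Kesten--Stigum limit to the pure MTBP started from the immigration state, decompose over surviving lineages (whose limits are i.i.d.\ copies of $\zeta_i$ conditioned on $\zeta_i>0$, using that $\{\zeta_i=0\}$ coincides a.s.\ with extinction since both have probability $q_i$), and rescale by $\rho^{-(\nu+1)}$. The paper merely packages the same computation slightly differently, first deriving the law of the unconditioned limit $\zeta_G$ for initial distribution $G$ and then conditioning on $\{\zeta_G>0\}$ rather than conditioning directly on $\{\nu=n\}$ and the surviving-lineage counts, and the two bookkeeping schemes yield the identical formula~\eqref{eq28}.
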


\begin{proof} 
Since $t_1^\ssn = t^\ssn$ for $n > \nu$, it suffices to find the a.s. limit of $\vc{Q}(t^\ssn) / \rho^n$.

First we find the asymptotics of the auxiliary MTBP $\{ \vc{Z}^\ssn \}_{n \in \ntr}$ (without immigration) under the assumption that $\vc{Z}^\sso$ is distributed according to $\{G(\vc{k})\}_{\vc{k} \in \zplus^I}$ (the immigration distribution for the MTBP $\{ \vc{Q}(t^\ssn) \}_{n \in \ntr}$ ). 

By Proposition~\ref{th:Kesten_Stigum}, if the distribution of $\vc{Z}^\sso$ is $\{G(\vc{k})\}_{\vc{k} \in \zplus^I}$, we have
\[
\vc{Z}^\ssn / \rho^n \to (\underbrace{ \sum_{\vc{k} \in \zplus^I} \ind \{ \vc{Z}^\sso = \vc{k} \} \sum_{i=1}^I \sum_{j=1}^{k_i} \zeta_i^\ssj }_{\displaystyle{=: \zeta_G}}) \, \vc{v} \quad \text{a.s. as $n \to \infty$},
\]
where $\zeta_i^\ssj$, $j \in \ntr$, are i.i.d.\ copies of $\zeta_i$, and the sequences $\{ \zeta_i^\ssj \}_{j \in \ntr}$, $i = 1, \ldots, I$, are mutually independent and also independent from~$\vc{Z}^\sso$.

The distribution of $\zeta_G$ is given by
\begin{align} 
\pr \{ \zeta_G = 0 \} &= q_G,  \nonumber \\
\pr \{ \zeta_G \geq x \} &= \sum_{\begin{subarray}{l} \vc{k} \in \zplus^I, \\ |\vc{k}| \geq 1 \end{subarray}} G(\vc{k}) \sum_{\begin{subarray}{l} \vc{l} \leq \vc{k}, \\ |\vc{l}| \geq 1 \end{subarray}} \binom{\vc{k}}{\vc{l}} p(\vc{l}) \times \label{eq27} \\
& \quad \times \underbrace{\prod_{i=1}^I \pr \{ \zeta_i > 0 \}^{l_i} \pr \{ \zeta_i = 0 \}^{k_i - l_i}}_{\displaystyle{=(\vone - \vc{q})^{\vc{k}-\vc{l}} \vc{q}^\vc{l}}}, \quad x \in \pos, \nonumber
\end{align}
\vspace{-10pt}
where
\begin{align*}
p(\vc{l}) &= \pr \{ \sum_{i=1}^I \sum_{j=1}^{l_i} \zeta_i^\ssj \geq x \ | \ \zeta_i^\ssj > 0 \text{ for all $i$ and $j = 1, \ldots, l_i$} \} \\
&= \pr \{ \sum_{i=1}^I \sum_{j=1}^{l_i} \xi_i^\ssj \geq x \}
\end{align*}
with the random variables $\xi_i^\ssj$ defined in Theorem~\ref{th:eta_n}.

Now, on the event $\{ \nu = N \}$,
\[
\vc{Q}^{(N+1+n)} / \rho^n \to \zeta_N \vc{v} \quad \text{a.s. as $n \to \infty$},
\]
where
\[
\pr \{ \zeta_N \in \cdot \} = \pr \{ \zeta_G \in \cdot | \vc{Z}^\ssn \neq \vo \text{ for all } n \in \zplus \} = \pr \{ \zeta_G \in \cdot | \zeta_G > 0 \}.
\]
Then
\[
\vc{Q}(t^\ssn) / \rho^n \to (\underbrace{ \sum_{N \in \zplus} \ind \{ \nu = N \} \zeta_N / \rho^{N+1} }_{\displaystyle{=: \zeta}}) \, \vc{v} \quad \text{a.s. as $n \to \infty$},
\]
and it is left to check that the distribution of $\zeta$ is given by~\eqref{eq28}.

For $x \in \pos$, we have
\[
\pr \{ \zeta \geq x \} = \sum_{N \in \zplus} \pr\{ \nu = N \} \pr \{ \zeta_G \geq \rho^{N+1} x \} / \pr \{ \zeta_G > 0 \},
\]
and then~\eqref{eq28} follows by~\eqref{eq27}.
\end{proof}

To deal with the other $t_i^\ssn$'s, we combine the previous lemma with LLN's.

\begin{lemma} \label{lem:t_i^n} For $i = 1, \ldots, I$, there exist constants $b_i \in \pos$ and $\vc{a}_i = (a_{i,1}, \ldots, a_{i,I}) \in \nneg^I$ such that
\[
t_i^\ssn / \rho^n \to b_i \zeta \quad \text{and} \quad \vc{Q}(t_i^\ssn / \rho^n) \to \zeta \vc{a}_i \quad \text{a.s. as $n \to \infty$}.
\]
The $b_i$'s and $\vc{a}_i$'s are given by
\begin{equation} \label{eq:b_i}
b_1 = \frac{\sum_{i=1}^I v_i / \mu_i}{\sum_{i=1}^I \lmb_i / \mu_i - 1}, \quad b_{i+1} = b_i + (v_i + \lmb_i (b_i - b_1)) \gamma_i, \quad i = 1, \ldots, I,
\end{equation}
and
\begin{equation} \label{eq:a_i}
\vc{a}_1 = \vc{v}, \quad \vc{a}_{i+1} = \vc{a}_i + (b_{i+1} - b_i) \boldsymbol{\lmb} - (b_{i+1} - b_i) \mu_i \vc{e}_i, \quad i = 1, \ldots, I.
\end{equation}
The $\vc{a}_i$'s also satisfy
\begin{equation} \label{eq:a_i_only}
\vc{a}_1 = \vc{v}, \quad \vc{a}_{i+1} = \vc{a}_i - a_{i,i} \vc{e}_i + a_{i,i} \check{\vc{m}}_i, \quad i = 1, \ldots, I.
\end{equation}
\end{lemma}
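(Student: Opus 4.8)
The plan is to proceed by induction on $i$, starting from Lemma~\ref{lem:Q(t^n)}, which already gives the case $i=1$: indeed $t_1^\ssn = t^\ssn$ for $n > \nu$, so $t_1^\ssn/\rho^n \to 1 \cdot \zeta = b_1 \zeta$ once we verify that the normalization in Lemma~\ref{lem:Q(t^n)} is consistent with setting $b_1 = \alpha$, and $\vc{Q}(t_1^\ssn)/\rho^n \to \zeta \vc{v} = \zeta \vc{a}_1$. The first thing to pin down is the value of $b_1$: the total workload (or total population) in the overloaded system grows linearly, and the fraction of a cycle spent at queue~$i$ is asymptotically proportional to $v_i/\mu_i$ while the net growth rate per unit of real time is $\sum_i \lmb_i/\mu_i - 1$ (in workload units); balancing the length of session~$n$ against the growth $\rho^{n}$ of the embedded MTBP forces $b_1 = \bigl(\sum_i v_i/\mu_i\bigr)/\bigl(\sum_i \lmb_i/\mu_i - 1\bigr) = \alpha$. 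I would make this precise by writing $t^{(n+1)} - t^\ssn = \sum_{i=1}^I (t_{i+1}^\ssn - t_i^\ssn)$ and applying a SLLN to each visit length below.

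The inductive step passes from $(b_i, \vc{a}_i)$ to $(b_{i+1}, \vc{a}_{i+1})$ using the dynamics of visit~$n$ to queue~$i$. The key decomposition is: at the polling instant $t_i^\ssn$ there are $Q_i(t_i^\ssn)$ customers in queue~$i$, and by Property~\ref{pty:branching} each of them is replaced, over the course of the visit, by an i.i.d.\ copy of $\check{\vc{L}}_i$, while the visit lasts $\sum_{k=1}^{Q_i(t_i^\ssn)} V_i^{(n,k)}$ units of time (using the notation of Section~\ref{sec:add_notation}, with the $V_i^{(n,k)}$ i.i.d.\ copies of $V_i$). Dividing by $\rho^n$ and using the induction hypothesis $Q_i(t_i^\ssn)/\rho^n \to \zeta a_{i,i}$ together with a SLLN for the i.i.d.\ arrays $\{V_i^{(n,k)}\}$ and $\{\check{\vc{L}}_i^{(n,k)}\}$ (along the random but a.s.\ finite index $Q_i(t_i^\ssn)$, which tends to infinity a.s.\ on the survival event), one gets
\[
\frac{t_{i+1}^\ssn - t_i^\ssn}{\rho^n} \to \zeta a_{i,i} \gamma_i, \qquad
\frac{\vc{Q}(t_{i+1}^\ssn) - \vc{Q}(t_i^\ssn) \cdot (\text{non-}i\text{ part})}{\rho^n} \to \zeta a_{i,i} \check{\vc{m}}_i .
\]
More carefully, during the visit queue~$i$ loses its $Q_i(t_i^\ssn)$ initial customers plus gains $\check L_{i,i}$ descendants per initial customer, while each queue $j\neq i$ gains arrivals; tracking both the descendants of the polled customers and the fresh Poisson arrivals during the elapsed time $(t_{i+1}^\ssn - t_i^\ssn)$, the bookkeeping collapses to $\vc{a}_{i+1} = \vc{a}_i - a_{i,i}\vc{e}_i + a_{i,i}\check{\vc{m}}_i$, which is \eqref{eq:a_i_only}; and $b_{i+1} = b_i + a_{i,i}\gamma_i$. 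Then \eqref{eq:a_i} and \eqref{eq:b_i} follow by re-expressing $a_{i,i}$: from \eqref{eq:a_i_only} and $\check m_{i,j} = \lmb_j\gamma_i$ for $j\neq i$, $\check m_{i,i} = 1 - (\mu_i-\lmb_i)\gamma_i$, one checks $a_{i+1,j} = a_{i,j} + (b_{i+1}-b_i)\lmb_j$ for $j\neq i$ and $a_{i+1,i} = a_{i,i} - (b_{i+1}-b_i)(\mu_i - \lmb_i) = a_{i,i} + (b_{i+1}-b_i)\lmb_i - (b_{i+1}-b_i)\mu_i$, giving \eqref{eq:a_i}; and iterating $a_{i,i} = v_i + \lmb_i(b_i - b_1)$ (which follows because the $j=i$ coordinate only changes at visits $1,\dots,i-1$, each contributing $\lmb_i(b_{k+1}-b_k)$, summing telescopically to $\lmb_i(b_i-b_1)$ on top of $a_{1,i}=v_i$) turns $b_{i+1} = b_i + a_{i,i}\gamma_i$ into \eqref{eq:b_i}. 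It remains to note $b_i > 0$ (immediate, $b_1 = \alpha > 0$ by Assumption~\ref{ass:load} and $\gamma_i \geq 0$) and $\vc{a}_i \geq \vo$ (each $a_{i,j}$ is a limit of nonnegative quantities).

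The main obstacle I anticipate is the uniformity needed to justify the SLLN for the \emph{triangular-array} structure: the number of terms $Q_i(t_i^\ssn)$ summed in visit~$n$ is itself random and $n$-dependent, and the summands $V_i^{(n,k)}$, $\check{\vc{L}}_i^{(n,k)}$ for \emph{different} $n$ are not the same i.i.d.\ sequence — one must set things up so that, e.g., partial sums $\sum_{k\le m} V_i^{(n,k)}/m \to \gamma_i$ uniformly enough in $n$, which is where the moment control from Lemma~\ref{lem:log} (finiteness of $\ex V_i$, indeed of $\ex V_i \log V_i$, and of $\ex L_{i,j}\log L_{i,j}$) enters to rule out large fluctuations via a Borel--Cantelli / maximal-inequality argument. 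A clean way around this is to observe that the descendants of a customer in position $k$ at the start of visit~$n$ depend only on the arrival and service primitives occurring after $t_i^\ssn$, so one can couple the whole collection $\{V_i^{(n,k)}, \check{\vc{L}}_i^{(n,k)}\}_{n,k}$ to a single doubly-indexed i.i.d.\ family and appeal to a SLLN with a random, a.s.-finite number of summands (which is exactly how Lemma~\ref{lem:Q(t^n)} already handled the MTBP limit). Given that $\zeta > 0$ a.s.\ on the event that the system never re-empties, $Q_i(t_i^\ssn) \to \infty$ a.s., so the law of large numbers applies; on the complementary (null-after-$\nu$) part the statement is vacuous since we are on the survival event implicitly throughout. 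Modulo this technical care the rest is the routine algebra sketched above.
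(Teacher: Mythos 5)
Your induction correctly identifies the constants, and you have also correctly spotted the crux: the visit-level decomposition $t_{i+1}^\ssn - t_i^\ssn = \sum_{k=1}^{Q_i(t_i^\ssn)} V_i^{(n,k)}$ involves a genuine triangular array, since the summands for different $n$ are built from disjoint, fresh portions of the arrival and service primitives. The problem is that your proposed repair does not close the gap: coupling the collection $\{V_i^{(n,k)}\}_{n,k}$ to a doubly-indexed i.i.d.\ family and invoking ``a SLLN with a random number of summands'' still leaves you with row-wise averages $\frac{1}{Q_i(t_i^\ssn)}\sum_{k \leq Q_i(t_i^\ssn)} V_i^{(n,k)}$ in which each row is a \emph{new} i.i.d.\ sample; with only first-moment (or $x\log x$) control this yields convergence in probability, not almost sure convergence (a.s.\ convergence across rows would require deviation bounds summable in $n$, hence higher moments of $V_i$ and $\check{\vc{L}}_i$, which are not assumed). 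Nor is this ``exactly how Lemma~\ref{lem:Q(t^n)} handled the MTBP limit'': that lemma rests on the Kesten--Stigum theorem, i.e.\ on the martingale structure of $\vc{Z}^\ssn/\rho^n$, not on a row-wise law of large numbers.

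The paper resolves this by splitting the two tasks. Almost-sure convergence of $t_i^\ssn/\rho^n$ and $\vc{Q}(t_i^\ssn)/\rho^n$ is obtained from \emph{cumulative} identities in the fixed i.i.d.\ primitives: first $t^\ssn = \Sgm + \sum_i\sum_{k=1}^{D_i(t^\ssn)}B_i^\ssk$ with $D_i(t^\ssn)=E_i(t^\ssn)-Q_i(t^\ssn)$, solved for $t^\ssn/\rho^n$ to get $b_1$ (note the equation is self-referential, so one must actually solve it rather than ``balance'' rates); then $t_{i+1}^\ssn - t_i^\ssn = I_i(t^{(n+1)})-I_i(t^\ssn)$ and $Q_j(t_{i+1}^\ssn)-Q_j(t_i^\ssn)=E_j(t_{i+1}^\ssn)-E_j(t_i^\ssn)-\ind\{j=i\}\bigl(B_i(I_i(t^{(n+1)}))-B_i(I_i(t^\ssn))\bigr)$. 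These are partial sums of single i.i.d.\ sequences evaluated at random times tending to infinity, so the ordinary SLLN applies almost surely and yields \eqref{eq:a_i} directly. Only after a.s.\ convergence is secured does the paper use your visit-level branching decomposition, together with the weak LLN of Statement~\ref{st:LLN}, to identify $b_{i+1}-b_i=a_{i,i}\gamma_i$ and $a_{i,i}=v_i+\lmb_i(b_i-b_1)$ and to prove \eqref{eq:a_i_only}; an in-probability limit suffices there precisely because the a.s.\ limit is already known to exist. Your algebra relating \eqref{eq:b_i}, \eqref{eq:a_i} and \eqref{eq:a_i_only} is correct; the gap is confined to the mode-of-convergence issue above.
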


\begin{remark} \label{rem:a_ova}
As we compare \eqref{eq:b_i}--\eqref{eq:a_i} with \eqref{eq:ovb_i}--\eqref{eq:ova_i}, it immediately follows that
\[
b_i = \alpha \ovb_i \quad \vc{a}_i = \alpha \ovva_i
\]
\end{remark}

{\it Proof of Lemma~\textup{\ref{lem:t_i^n}}.} First we show that the sequences of $t_i^\ssn / \rho^n$ and $\vc{Q}(t_i^\ssn)$ converge a.s., and that their limits satisfy the relations~\eqref{eq:a_i}. Then we derive equations~\eqref{eq:b_i} and \eqref{eq:a_i_only} relying on an~LLN that, generally speaking, guarantees the $b_i$'s to be in-probability-limits only.

\paragraph{Asymptotics of $t_1^\ssn$} By the definition of~$\nu$, which is a.s. finite, we have, for $n > \nu$, 
\[
t_1^\ssn = t^\ssn = \underbrace{\sum_{l=0}^\nu (t_1^\ssl - t^\ssl)}_{\displaystyle{=: \Sgm}} + \sum_{i=1}^I \sum_{k=1}^{D_i(t^\ssn)} B_i^\ssk.
\]
where $\Sgm$ is a.s. finite.

The last equation with $D_i(t^\ssn) = E_i(t^\ssn) - Q_i(t^\ssn)$ plugged in can be transformed into
\[
t_1^\ssn = t^\ssn = \Sigma + t^\ssn \underbrace{ \sum_{i=1}^I \frac{\sum_{k=1}^{D_i(t^\ssn)} B_i^\ssk}{D_i(t^\ssn)} \frac{E_i(t^\ssn)}{t^\ssn} }_{\displaystyle{=: \Sigma_1^\ssn}} - \rho^n \underbrace{ \sum_{i=1}^I \frac{\sum_{k=1}^{D_i(t^\ssn)} B_i^\ssk}{D_i(t^\ssn)} \frac{Q_i(t^\ssn)}{\rho^n}}_{\displaystyle{=: \Sigma_2^\ssn}},
\]
and then into
\begin{equation} \label{eq1} 
t_1^\ssn / \rho^n = t^\ssn / \rho^n = \frac{\Sigma_2^\ssn - \Sigma / \rho^n}{\Sigma_1^\ssn - 1}.
\end{equation}
By the SLLN and Lemma~\ref{lem:Q(t^n)},
\[
\Sigma_1^\ssn \to \sum_{i=1}^I \lmb_i / \mu_i \quad \text{and} \quad \Sigma_2^\ssn \to (\sum_{i=1}^I v_i / \mu_i) \zeta \quad \text{a.s. as $n \to \infty$},
\]
which, together with \eqref{eq1}, implies that
\begin{equation} \label{eq2}
t^\ssn / \rho^n \to b_1 \zeta \quad \text{and} \quad  t_1^\ssn / \rho^n \to b_1 \zeta \quad \text{a.s. as $n \to \infty$},
\end{equation}
where the value of $b_1$ is the one claimed in the Lemma.

\paragraph{Convergence of $t_i^\ssn / \rho^n$} Note that
\[
t_{i+1}^\ssn - t_i^\ssn = I_i(t^{(n+1)}) - I_i(t^\ssn),
\]
and hence,
\begin{equation} \label{eq3}
\frac{t_{i+1}^\ssn}{\rho^n} = \frac{t_i^\ssn}{\rho^n} + \frac{I_i(t^{(n+1)})}{B_i(I_i(t^{(n+1)}))} \frac{D_i(t^{(n+1)})}{\rho^{n+1}} \rho - \frac{I_i(t^\ssn)}{B_i(I_i(t^\ssn))} \frac{D_i(t^\ssn)}{\rho^n}.
\end{equation}
By the SLLN,
\begin{equation} \label{eq4}
\frac{B_i(I_i(t^\ssn))}{I_i(t^\ssn)} \to \mu_i \quad \text{a.s. as $n \to \infty$} .
\end{equation}
By the SLLN, \eqref{eq1} and Lemma~\ref{lem:Q(t^n)},
\begin{equation} \label{eq5}
\frac{D_i(t^\ssn)}{\rho^n} = \frac{E_i(t^\ssn)}{t^\ssn} \frac{t^\ssn}{\rho^n} - \frac{Q_i(t^\ssn)}{\rho^n} \to (\lmb_i b_1 - v_i) \zeta \quad \text{a.s. as $n \to \infty$}.
\end{equation}
As we put \eqref{eq2}--\eqref{eq5} together, it follows that there exist positive numbers $b_i$ such that
\begin{equation} \label{eq6} 
t_i^\ssn / \rho^n \to b_i \zeta \quad \text{a.s. as $n \to \infty$}, \quad i = 1, \ldots, I+1.
\end{equation}
(The value of $b_1$ is the one claimed in the Lemma, and the equations for the other $b_i$'s that follow from \eqref{eq2}--\eqref{eq5} are not given here since they will not be used anywhere in the proofs.)

\paragraph{Convergence of $\vc{Q}(t_i) / \rho^n$ and \eqref{eq:a_i}} Since, during the time interval $[t_i^\ssn, t_{i+1}^\ssn)$, there are no departures from queues other than~$i$, we have
\begin{equation} \label{eq7}
Q_j(t_{i+1}^\ssn) - Q_j(t_i^\ssn) = E_j(t_{i+1}^\ssn) - E_j(t_i^\ssn) - \ind\{ j = i \} (B_i(I_i(t^{(n+1)})) - B_i(I_i(t^\ssn))).
\end{equation}
By the SLLN and \eqref{eq6},
\begin{equation} \label{eq8}
\frac{E_j(t_{i+1}^\ssn) - E_j(t_i^\ssn)}{\rho^n} \to \lmb_j(b_{i+1} - b_i) \zeta \quad \text{a.s. as $n \to \infty$}.
\end{equation}
By \eqref{eq6},
\begin{equation} \label{eq17}
\frac{I_i(t^\ssn)}{\rho^n} = \frac{\sum_{k=1}^{n-1} (t_{i+1}^\ssk - t_i^\ssk)}{\rho^n} \to \frac{b_{i+1} - b_i}{\rho - 1} \zeta  \quad \text{a.s. as $n \to \infty$},
\end{equation}
which, together with the SLLN, implies that
\begin{equation} \label{eq9}
\frac{B_i(I_i(t^{(n+1)})) - B_i(I_i(t^\ssn))}{\rho^n} \to \mu_i(b_{i+1} - b_i) \zeta \quad \text{a.s. as $n \to \infty$} \quad 
\end{equation}
As we put Lemma~\ref{lem:Q(t^n)} and \eqref{eq7}--\eqref{eq9} together, it follows that 
\begin{equation} \label{eq10}
\vc{Q}(t_i^\ssn) / \rho^n \to \zeta \vc{a}_i \quad \text{a.s. as $n \to \infty$}, \quad i = 1, \ldots, I+1,
\end{equation}
where the vectors $\vc{a}_i = (a_{i,1}, \ldots, a_{i,I})$ are given by~\eqref{eq:a_i}.

\paragraph{Proof of \eqref{eq:b_i} and \eqref{eq:a_i_only}} We derive~\eqref{eq:b_i} from the equations
\begin{align}
t_{i+1}^\ssn &= t_i^\ssn + \sum_{k=1}^{Q_i(t_i^\ssn)} V_i^{(n,k)} \label{eq11}, \\
Q_i(t_i^\ssn) &= Q_i(t_1^\ssn) + E_i(t_i^\ssn) - E_i(t_1^\ssn). \label{eq12}
\end{align}

To~\eqref{eq11}, we apply the following form of the LLN (the proof is postponed to the appendix).

\begin{statement} \label{st:LLN}
Let a~random variable $Y$ have a~finite mean value and, for each $n \in \ntr$, let $Y_n^\ssk$, $k \in \ntr$, be i.i.d.\ copies of $Y$. Let $\tau_n$, $n \in \ntr$, be $\zplus$-valued random variables such that $\tau_n$ is independent of the sequence $\{Y_n^\ssk\}_{k \in \ntr}$ for each~$n$ and $\tau_n \to \infty$ in probability as $n \to \infty$. Finally, let a~sequence $\{T_n\}_{n \in \ntr}$ of positive numbers increase to $\infty$. If there exists an~a.s.~finite random variable $\tau$ such that $\tau_n / T_n \to \tau$ in probability as $n \to \infty$, then
\[
\sum_{k=1}^{\tau_n} Y_n^\ssk / T_n \to \tau \ex Y \quad \text{in probability as $n \to \infty$}.
\]
\end{statement}

By \eqref{eq11} and Statement~\ref{st:LLN},
\begin{equation} \label{eq13}
b_{i+1} - b_i = a_{i,i} \gamma_i.
\end{equation}

By \eqref{eq12} and the SLLN,
\begin{equation} \label{eq14}
a_{i,i} = v_i + \lmb_i(b_i - b_1).
\end{equation}

Then~\eqref{eq:b_i} follows as we plug \eqref{eq14} into \eqref{eq13}.

Finally,~\eqref{eq:a_i_only} follows as we apply Statement~\ref{st:LLN} to the equation
\begin{equation*}
\vc{Q}(t_{i+1}^\ssn) = \vc{Q}(t_i^\ssn) - Q_i(t_i^\ssn) \vc{e}_i + \sum_{k=1}^{Q_i(t_i^\ssn)} \check{\vc{L}}_i^{(n,k)}. \tag*{\qed}
\end{equation*}

\subsection{Proof of Thereom~\ref{th:eta_n}} \label{sec:proof_th_eta_n}
This proof converts the results of Lemma~\ref{lem:t_i^n} using the following tool.

\begin{lemma} \label{lem:eta_n}
Suppose that random variables $Y^\ssn$, $n \in \mathbb{Z}$, and $Y$ are such that
\[
Y^\ssn / \rho^n \to Y \zeta \quad \text{a.s. as $n \to \infty$}.
\]
Then, for all $k \in \mathbb{Z}$,
\[
Y^{(\eta_n + k)} / \rho^n \to Y \rho^{\lf \log_\rho (\alpha \zeta) \rf} \rho^k / \alpha \quad \text{a.s. as $n \to \infty$}.
\]
\end{lemma}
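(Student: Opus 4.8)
The plan is to convert the given a.s.\ convergence $Y^\ssn/\rho^n \to Y\zeta$ at integer times into the corresponding statement along the random subsequence $\eta_n+k$. First I would unpack the definition of $\eta_n$: for $n \geq 0$, $\eta_n = \min\{k : t^\ssk \geq \rho^n\}$, so $\eta_n$ is determined by how the session start times $t^\ssk$ grow relative to $\rho^n$. From Lemma~\ref{lem:t_i^n} (with $b_1 = \alpha \ovb_1 = \alpha$ by Remark~\ref{rem:a_ova}, since $\ovb_1 = 1$), we have $t^\ssk/\rho^k \to b_1 \zeta = \alpha\zeta$ a.s. I would use this to pin down the asymptotics of $\eta_n$: since $t^{(\eta_n)} \geq \rho^n > t^{(\eta_n - 1)}$, dividing through by $\rho^{\eta_n}$ and using $t^{(\eta_n)}/\rho^{\eta_n} \to \alpha\zeta$ gives $\rho^{n - \eta_n} \to \alpha\zeta$ in the sense that $n - \eta_n$ eventually equals $\lceil \log_\rho(\alpha\zeta)\rceil$ up to a bounded (in fact asymptotically constant) discrepancy — more precisely, $\eta_n - n + \log_\rho(\alpha\zeta) \to 0$, hence $\eta_n = n - \lf \log_\rho(\alpha\zeta) \rf$ or $n - \lc \log_\rho(\alpha\zeta)\rc$ for all large $n$, and (away from the measure-zero event where $\log_\rho(\alpha\zeta) \in \mathbb{Z}$) $\eta_n - (n - \lf\log_\rho(\alpha\zeta)\rf)$ is eventually $0$.

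The key technical point is that $\eta_n - n$ \emph{stabilizes} a.s.\ to the constant value $-\lf\log_\rho(\alpha\zeta)\rf$ for all sufficiently large $n$. This follows because $t^\ssk/\rho^k$ converges to a strictly positive finite limit: writing $m := \lf\log_\rho(\alpha\zeta)\rf$, one has $\alpha\zeta \in [\rho^m, \rho^{m+1})$, so $t^{(n-m)}/\rho^{n-m} \to \alpha\zeta \geq \rho^m$ forces $t^{(n-m)} \geq \rho^n$ eventually, while $t^{(n-m-1)}/\rho^{n-m-1} \to \alpha\zeta < \rho^{m+1}$ forces $t^{(n-m-1)} < \rho^n$ eventually; together these give $\eta_n = n - m$ for all large $n$. (The boundary event $\{\alpha\zeta = \rho^m\}$ has probability zero because $\zeta$ has a continuous density on $\pos$, cf.\ Proposition~\ref{th:Kesten_Stigum} and Lemma~\ref{lem:Q(t^n)}.) Once $\eta_n + k = n + k - m$ for all large $n$, I would simply substitute into the hypothesis:
\[
\frac{Y^{(\eta_n+k)}}{\rho^n} = \frac{Y^{(n+k-m)}}{\rho^{n+k-m}} \cdot \rho^{k-m} \to Y\zeta \cdot \rho^{k-m} = Y\,\rho^{\lf\log_\rho(\alpha\zeta)\rf}\rho^k \cdot \frac{\zeta}{\rho^m},
\]
and since $\zeta/\rho^m = \zeta/\rho^{\lf\log_\rho(\alpha\zeta)\rf} = \rho^{\log_\rho\zeta - \lf\log_\rho(\alpha\zeta)\rf}$; to match the claimed form $Y\rho^{\lf\log_\rho(\alpha\zeta)\rf}\rho^k/\alpha$ I need $\zeta/\rho^m = 1/\alpha$ — wait, that is not an identity, so I must be more careful: the correct reading is that the limit equals $Y\rho^k\zeta = Y\rho^k \cdot \rho^{\lf\log_\rho(\alpha\zeta)\rf}\cdot(\alpha\zeta/\rho^{\lf\log_\rho(\alpha\zeta)\rf})/\alpha$, and since $\alpha\zeta/\rho^{\lf\log_\rho(\alpha\zeta)\rf} = \rho^{\{\log_\rho(\alpha\zeta)\}} \in [1,\rho)$, one rewrites $Y\rho^k\zeta = (Y\rho^k/\alpha)\rho^{\lf\log_\rho(\alpha\zeta)\rf}\rho^{\{\log_\rho(\alpha\zeta)\}}$, which is \emph{not} literally the displayed right-hand side — so I would double-check the statement against $\xi := \rho^{\{\log_\rho(\alpha\zeta)\}}$, which is exactly the variable in Theorem~\ref{th:eta_n} taking values in $[1,\rho)$; the intended limit in Lemma~\ref{lem:eta_n} must therefore be $Y\rho^{\lf\log_\rho(\alpha\zeta)\rf}\rho^k/\alpha$ interpreted with $\zeta$ replaced consistently, and the cleanest route is to just leave the limit as $Y\rho^k\zeta$ and record separately that $\rho^k\zeta = \rho^{\lf\log_\rho(\alpha\zeta)\rf}\rho^k\xi/\alpha$.

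\textbf{Main obstacle.} The delicate part is the stabilization argument for $\eta_n - n$, and in particular the justification that the boundary event $\{\log_\rho(\alpha\zeta) \in \mathbb{Z}\}$ is null. This uses the continuous-density property of $\zeta$ (inherited from the Kesten--Stigum limits $\zeta_i$ via Lemma~\ref{lem:Q(t^n)}), and I would need to state this cleanly and invoke it. Everything else — the substitution and the algebra relating $\zeta$, $\alpha$, $\lf\log_\rho(\alpha\zeta)\rf$ and the fractional-part variable $\xi$ — is routine bookkeeping, but as noted above I would take care to present the limit in whichever of the equivalent forms ($Y\rho^k\zeta$ versus $Y\rho^{\lf\log_\rho(\alpha\zeta)\rf}\rho^k/\alpha$) is actually correct and consistent with how $\xi$ is subsequently defined and used.
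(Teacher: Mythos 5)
Your overall strategy is exactly the paper's: show that $n-\eta_n$ a.s.\ stabilizes at $\lf\log_\rho(\alpha\zeta)\rf$ for all large $n$ (using $t^{(j)}/\rho^j\to\alpha\zeta$ together with the fact that the boundary event $\{\log_\rho(\alpha\zeta)\in\mathbb{Z}\}$ is null because $\zeta$ has a continuous distribution on $\pos$), and then substitute into the hypothesis along the shifted deterministic-looking subsequence. That stabilization argument is the real content of the lemma, and your version of it is correct and matches the paper's proof step for step.

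However, your final algebra goes wrong and you land on the wrong limit. Writing $m=\lf\log_\rho(\alpha\zeta)\rf$, from $\eta_n+k=n+k-m$ you correctly obtain
\[
\frac{Y^{(\eta_n+k)}}{\rho^n}\;\to\; Y\zeta\,\rho^{k-m}\;=\;Y\rho^{k}\cdot\frac{\zeta}{\rho^{m}},
\]
but you then rewrite $Y\zeta\rho^{k-m}$ as $Y\rho^{m}\rho^{k}\cdot(\zeta/\rho^{m})=Y\rho^{k}\zeta$, which silently inserts a spurious factor $\rho^{m}$. The correct simplification of the remaining ratio is
\[
\frac{\zeta}{\rho^{m}}=\frac{\rho^{\log_\rho(\alpha\zeta)}}{\alpha\,\rho^{\lf\log_\rho(\alpha\zeta)\rf}}=\frac{\rho^{\{\log_\rho(\alpha\zeta)\}}}{\alpha}=\frac{\xi}{\alpha},
\]
so the limit is $Y\rho^{k}\xi/\alpha$ with $\xi=\rho^{\{\log_\rho(\alpha\zeta)\}}$, not $Y\rho^{k}\zeta$. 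You are right to distrust the displayed right-hand side of the lemma: it is a typo, with the floor $\lf\cdot\rf$ standing where the fractional part $\{\cdot\}$ is intended, and the paper's own proof indeed ends by deriving $\zeta/\rho^{\lf\log_\rho(\alpha\zeta)\rf}=\rho^{\{\log_\rho(\alpha\zeta)\}}/\alpha$. But your proposed resolution, namely that the limit equals $Y\rho^{k}\zeta=Y\rho^{\lf\log_\rho(\alpha\zeta)\rf}\rho^{k}\xi/\alpha$, is off by the factor $\rho^{\lf\log_\rho(\alpha\zeta)\rf}$; it is inconsistent with Theorem~\ref{th:eta_n}, which requires $t_i^{(\eta_n+k)}/\rho^n\to b_i\xi\rho^{k}/\alpha=\ovb_i\xi\rho^{k}$ (using $b_i=\alpha\ovb_i$), i.e.\ exactly the $Y\rho^{k}\xi/\alpha$ form.
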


{\it Proof.} First we show that,
\begin{equation} \label{eq15} 
\text{a.s.}, \quad n - \eta_n = \lf \log_\rho (\alpha \zeta) \rf \quad \text{for all $n$ big enough}.
\end{equation}

Indeed, we have $\log_\rho (t^\ssn) - n = \log_\rho (\alpha \zeta) + \dlt^\ssn$, where $\dlt^\ssn \to 0$ a.s. as $n \to \infty$. Then $\eta_n  = \min \{ k \colon \log_\rho (t^\ssk) \geq n \} = \min \{ k \colon k +\dlt^\ssk \geq n - \log_\rho (\alpha \zeta) \}$. Introduce the event $\Omega' := \{ \dlt^\ssn \to 0, \log_\rho (\alpha \zeta) \notin \mathbb{Z} \}$. When estimated at any $\omega \in \Omega'$, $\eta_n = \lc n -  \log_\rho (\alpha \zeta) \rc = n - \lf \log_\rho (\alpha \zeta) \rf$ for all~$n$ big enough; and $\pr \{ \Omega' \} = 1$ since the distribution function of random variable $\zeta$ is continuous in~$\pos$ (see~\eqref{eq28}, where the random variables $\xi_i^\ssj$ have continuous densities on~$\pos$ by Proposition~\ref{th:Kesten_Stigum}).

Now fix a~$k \in \mathbb{Z}$. By~\eqref{eq15}, 
\[
\frac{Y^{(\eta_n + k)}}{\rho^n}  = \frac{Y^{(\eta_n + k)}}{\rho^{\eta_n+k}} \frac{\rho^k}{\rho^{n - \eta_n}} \to Y \zeta \frac{\rho^k}{\rho^{\lf \log_\rho (\alpha \zeta) \rf}} \quad \text{a.s. as $n \to \infty$},
\]
where
\begin{equation*}
\frac{\zeta}{\rho^{\lf \log_\rho (\alpha \zeta) \rf}} = \frac{\rho^{ \log_\rho (\alpha \zeta) }}{\rho^{\lf \log_\rho (\alpha \zeta) \rf}}  \frac{1}{\alpha} = \rho^{ \{ \log_\rho (\alpha \zeta) \} } / \alpha. \tag*{\qed}
\end{equation*}

Now we proceed with the proof of Theorem~\ref{th:eta_n}. 

Lemmas~\ref{lem:t_i^n} and~\ref{lem:eta_n} imply that the convergence~\eqref{eq16} holds with
\[
\xi := \rho^{ \{ \log_\rho (\alpha \zeta) \} }.
\]

By definition, $\xi$ takes values in $[1, \rho)$, and it is left to calculate its distribution.

Fix an $x \in [1,\rho)$. Since
\begin{align*}
\pr \{ \xi \geq x \} = \pr \{ \{ \log_\rho (\alpha \zeta) \} \geq \log_\rho x \} 
&= \sum_{m \in \mathbb{Z}} \pr \{ m + \log_\rho x \leq \log_\rho (\alpha \zeta) < m + 1\} \\
&= \sum_{m \in \mathbb{Z}} \pr \{ \rho^m x / \alpha \leq \zeta < \rho^{m+1} / \alpha \},
\end{align*}
we have, by Lemma~\ref{lem:Q(t^n)},
\begin{align*}
\pr \{ \xi \geq x \} =& \frac{1}{1 - q_G} \sum_{n \in \zplus} \pr \{ \nu = n \} 
\sum_{\begin{subarray}{l} \vc{k} \in \zplus^I, \\ |\vc{k}| \geq 1 \end{subarray}} G(\vc{k}) \sum_{ \begin{subarray}{l} \vc{l} \leq \vc{k}, \\ |\vc{l}| \geq 1 \end{subarray}} \binom{\vc{k}}{\vc{l}}(\vone - \vc{q})^\vc{l} \vc{q}^{\vc{k}-\vc{l}} \times \\
&\times \underbrace{\sum_{m \in \mathbb{Z}} \pr \{ \rho^{m+n+1} x / \alpha \leq \sum_{i=1}^I \sum_{j=1}^{l_i} \xi_i^{(j)} < \rho^{m+n+2} / \alpha \}}_{\displaystyle{=: \Sigma_{n,l}}}.
\end{align*}
Note that $\Sigma_{n,l}$ does not depend on $n$,
\begin{align*}
\Sigma_{n,l} &= \sum_{m \in \mathbb{Z}} \pr \{ \rho^{m} x / \alpha \leq \sum_{i=1}^I \sum_{j=1}^{l_i} \xi_i^{(j)} < \rho^{m+1} / \alpha \} \\
&= \sum_{m \in \mathbb{Z}} \pr \{ m + \log_\rho x  \leq \log_\rho ( \alpha \sum_{i=1}^I \sum_{j=1}^{l_i} \xi_i^{(j)}) < {m+1} \} \\
&= \pr \{ \{ \log_\rho ( \alpha \sum_{i=1}^I \sum_{j=1}^{l_i} w_i^{(j)}) \}  \geq  \log_\rho x\},
\end{align*}
and this finishes the proof. \qed

\subsection{Proof of Theorem~\ref{th:fluid_limit}} \label{sec:proof_th_fluid_limit}
The proof consists of several steps. Throughout the proof, we assume that the function~$\ovvq(\cdot)$ is defined by~\eqref{eq:ovQ_v2}. First we show that the process $\xi \ovvq(\cdot / \xi)$ coincides a.s. with the pointwise limit of the scaled processes $\ovvq^\ssn(\cdot)$. Then we check that~$\ovvq(\cdot)$ satisfies~\eqref{eq:ovQ_v1} and is continuous. Finally, we prove that the pointwise convergence of the  processes $\ovvq^\ssn(\cdot)$ implies their uniform convergence on compact sets.

\paragraph{Pointwise convergence} To start with, we define the auxiliary event $\Ome'$ on which, as $n \to \infty$,
\begin{align*}
t_i^{(\eta_n + k)} / \rho^n \to  \rho^k \ovb_i \xi \quad \text{and} \quad 
\vc{Q}(t_i^{(\eta_n + k)}) / \rho^n \to \xi \rho^k \ova_i, \quad &i = 1, \ldots, I+1, \quad k \in \mathbb{Z}, \\
I(t_i^{(\eta_n + k)}) / \rho^n \to \rho^k \frac{\ovb_{i+1} - \ovb_i}{\rho(\rho - 1)} \xi, \quad &i = 1, \ldots, I, \quad k \in \mathbb{Z}, \\
E_i(t) / t  \to \lmb_i \quad \text{and} \quad B_i(t) / t  \to \mu_i, \quad &i = 1, \ldots, I.
\end{align*}
By theorem~\ref{th:eta_n}, \eqref{eq17} and the SLLN, $\pr \{ \Ome' \} = 1$.

We will now show that on $\Ome'$, as $n \to \infty$,
\begin{equation} \label{eq18}
\ovvq^\ssn(t) \to \xi \ovvq(t / \xi) \quad \text{for all $t \in \nneg$},
\end{equation}
where $\ovvq(\cdot)$ is given by~\eqref{eq:ovQ_v2}.

Fix a~queue number~$i$ and an~outcome $\ome \in \Ome'$. All random objects in the rest of this part of the proof will be evaluated at this~$\ome$.

For $t = 0$, the convergence \eqref{eq18} holds since the system starts empty. For $t > 0$, we consider the three possible cases.

{\it Case~$1 \colon$ $t \in [\rho^k \ovb_i \xi, \rho^k \ovb_{i+1} \xi)$ for a~$k \in \mathbb{Z}$.} By the definition of~$\Ome'$, for all~$n$ big enough,
\[
t_i^{(\eta_n + k)} / \rho^n < t < t_{i+1}^{(\eta_n + k)} / \rho^n,
\]
implying that queue~$i$ is in service during $[t_i^{(\eta_n + k)}, \rho^n t)$, and hence
\[
Q_i(\rho^n t) = Q_i(t_i^{(\eta_n + k)}) + (E_i(\rho^n t) - E_i(t_i^{(\eta_n + k)}) - (D_i(\rho^n t) - D_i(t_i^{(\eta_n + k)}),
\]
where
\[
D_i(\rho^n t) - D_i(t_i^{(\eta_n + k)}) = B_i(I_i(t_i^{(\eta_n + k)})+(\rho^n t - t_i^{(\eta_n + k)})) - B_i(I_i(t_i^{(\eta_n + k)})).
\]
Again by the definition of $\Ome'$, the last two equations imply that, as $n \to \infty$,
\[
\ovq_i^\ssn(t) \to \rho^k  \ova_{i,i} \xi + \lmb_i(t - \rho^k \ovb_i \xi) - \mu_i(t - \rho^k \ovb_i \xi) = \xi \ovq_i(t / \xi).
\]

{\it Case~$2 \colon$ $t \in [\rho^k \ovb_{i+1} \xi, \rho^{k+1} \ovb_i \xi)$ for a~$k \in \mathbb{Z}$.} In this case, for all~$n$ big enough,
\[
t_{i+1}^{(\eta_n + k)} / \rho^n < t < t_i^{(\eta_n + k + 1)} / \rho^n,
\]
and hence, queue~$i$ is not in service during $[\rho^n t, t_i^{(\eta_n + k + 1)})$, i.e.
\[
Q_i(t_i^{(\eta_n + k + 1)}) = Q_i(\rho^n t) + E_i(t_i^{(\eta_n + k + 1)}) - E_i(\rho^n),
\]
implying that
\[
\ovq_i^\ssn(t) \to \rho^{k+1} a_{i,i} \xi - \lmb_i(\rho^{k+1} \ovb_i \xi - t) = \xi \ovq_i(t / \xi).
\]

{\it Case~$3 \colon$ $t = \rho^k \ovb_i \xi$ for a~$k \in \mathbb{Z}$.} Since, as $n \to \infty$,
\[
t_{i+1}^{(\eta_n + k - 1)} / \rho^n \to \rho^{k-1} \ovb_{i+1} \xi, \quad t_i^{(\eta_n + k)} / \rho^n \to t \quad \text{and} \quad t_{i+1}^{(\eta_n + k)} / \rho^n \to \rho^k \ovb_{i+1} \xi
\]
and  the limits satisfy the inequality
\[
\rho^{k-1} \ovb_{i+1} \xi < t< \rho^k \ovb_{i+1} \xi,
\]
all $n$ big enough fall into the two sets
\[
\mathcal{N}_1 := \{ n \colon t_i^{(\eta_n + k)} \leq \rho^n t < t_{i+1}^{(\eta_n + k)}\}
\quad \text{and} \quad 
\mathcal{N}_2 := \{ n \colon t_{i+1}^{(\eta_n + k - 1)} < \rho^n t < t_i^{(\eta_n + k)}\}.
\]

For $l = 1,2$, we have to check that, if the set $\mathcal{N}_l$ is infinite, then
\begin{equation} \label{eq19}
\ovq_i^\ssn(t) \to \rho^k \ova_{i,i} \xi \quad \text{as $n \to \infty$, $n \in \mathcal{N}_l$}.
\end{equation}

For $l=1$,~\eqref{eq19} follows along the lines of Case~$1$. For $l = 2$, we can prove~\eqref{eq19} following the lines of Case~$2$ and replacing $k+1$ with~$k$.

\paragraph{Equivalence of \eqref{eq:ovQ_v1} and \eqref{eq:ovQ_v2}} Let $\wtil{\vc{Q}}(\cdot) = (\wtil{Q}_1, \ldots, \wtil{Q}_I)(\cdot)$ be the unique solution to~\eqref{eq:ovQ_v1}, whereas~$\ovvq(\cdot)$, as before, is given by~\eqref{eq:ovQ_v1}. Fix a~queue number~$i$. The slopes of $\ovq_i(\cdot)$ and $\wtil{Q}_i(\cdot)$ coincide everywhere. Also $\ovq_i(0) = 0 = \wtil{Q}_i(0)$, and $\ovq_i(\rho^k \ovb_i) = \rho^k \ova_{i,i} = \wtil{Q}_i(\rho^k \ovb_i)$, $k \in \mathbb{Z}$. Then it is left to check that
\begin{equation} \label{eq19}
\ovq_i(\rho^k \ovb_j) = \rho^k \ova_{j,i} = \wtil{Q}_i(\rho^k \ovb_j), \quad j \neq i, \quad k \in \mathbb{Z}.
\end{equation}
We have,
\begin{equation*}
\begin{array}{llll}
\rho^k \ovb_j \in [ \rho^{k-1} \ovb_{i+1}, \rho^k \ovb_i) & \text{and} & \ovq_i(\rho^k \ovb_j)  = \rho^k (\ovb_i - \lmb_i(\ovb_i - \ovb_j)), & j < i, \\
\rho^k \ovb_j \in [ \rho^k \ovb_{i+1}, \rho^{k+1} \ovb_i) & \text{and} & \ovq_i(\rho^k \ovb_j)  = \rho^k (\rho \ovb_i - \lmb_i(\rho \ovb_i - \ovb_j)), & j > i.
\end{array}
\end{equation*}

Then \eqref{eq19} follows from the equations
\begin{equation*}
\begin{array}{rll}
Q_i(t_i^\ssn) \!\!\!\!\!&= Q_i(t_j^\ssn) + E_i(t_i^\ssn) - E_i(t_j^\ssn), & j < i, \\
Q_i(t_i^{(n+1)}) \!\!\!\!\!&= Q_i(t_j^\ssn) + E_i(t_i^{(n+1)}) - E_i(t_j^\ssn), & j > i,
\end{array}
\end{equation*}
by Lemma~\ref{lem:t_i^n} and Remark~\ref{rem:a_ova}

\paragraph{Continuity of $\ovq(\cdot)$} Fix a~queue number~$i$. As defined by~\eqref{eq:ovQ_v2}, the function $\ovq_i(\cdot)$ might have discontinuities only at $t = 0$ and $t = \rho^k \ovb_{i+1}$, $k \in \mathbb{Z}$.

Note that $\sup_{t \in [\rho^{k-1} \ovb_i, \rho^k \ovb_i)} \ovq_i(t) = \rho^k \ova_{i,i}$, $k \in \mathbb{Z}$. Then $\sup_{t \in (0, \rho^k \ovb_i)} \ovq_i(t) = \sup_{l \in \mathbb{Z}, l \leq k} \rho^k \ova_{i,i} \to 0$ as $k \to -\infty$, and hence, $\ovq_i(t) \to 0 = \ovq_i(0)$ as $t \to 0$.

At $t = \rho^k \ovb_{i+1}$, $k \in \mathbb{Z}$, the function $\ovq_i(\cdot)$ is right-continuous with the left limit given by
$
\lim_{t \uparrow \rho^k \ovb_{i+1}} \ovq_i(t) = \rho^k (\ova_{i,i} + (\lmb_i - \mu_i)(\ovb_{i+1} - \ovb_i)).
$
By~\eqref{eq:ova_i} and~\eqref{eq19}, we have
$
\lim_{t \uparrow \rho^k \ovb_{i+1}} \ovq_i(t) = Q_i(\rho^k \ovb_{i+1}) = \rho^k a_{i+1,i}.
$

\paragraph{Uniform convergence on compact sets} Define the auxiliary event $\Ome''$ on which, as $n \to \infty$, $\ovvq^\ssn(\cdot) \to \xi \ovvq(\cdot / \xi)$ pointwise, and $E_i(\rho^n \cdot)/ \rho^n \to \lmb_i \, \cdot$ uniformly on compact sets, $i = 1, \ldots, I$. As follows from the first part of the proof and the functional SLLN, $\pr\{ \Ome'' \} = 1$. For the rest of the proof, we estimate random objects at an~outcome $\ome \in \Ome''$. Consider the scaled departure processes $D_i(\rho^n \cdot)/ \rho^n = E_i(\rho^n \cdot) / \rho^n - \ovq_i^\ssn(\cdot)$. These processes are monotone and, by the definition of~$\Ome''$, converge pointwise to the continuous functions $\lmb_i \cdot - \xi \ovq_i(\cdot / \xi)$. Then they converge uniformly on compact sets, and the same is true for the processes~$\ovq_i^\ssn(\cdot)$. 
\qed

\section*{Appendix}
\appendix

{\it Proof of Lemma~\ref{lem:supercritical}.} Suppose $\rho \leq 1$. Then, by~\cite[Theorem~7.1]{Harris}, we have $q_i = 1$ for all~$i$ and $q_G = 1$. The latter implies that the queue length process $\vc{Q}(\cdot)$ hits~$\vc{0}$ infinitely many times, and the same holds the workload process. Let $\{ t^{(n_k)} \}_{k \in \zplus}$ be the sequence of consecutive time instants such that $\vc{Q}(t^{(n_k)}) = \vc{0}$. For different $k$, the differences $t^{(n_{k+1})} - t^{(n_k)}$ are bounded from below by the waiting times until the first arrival into the empty system, which are i.i.d.\ random variables distributed exponentially with parameter $\sum_{i=1}^I \lmb_i$. Therefore, $t^{(n_k)} \to \infty$ a.s. as $k \to \infty$. This leads to a~contradiction with the fact that the system is overloaded and its total workload grows infinitely large with time (by the SLLN, $(\sum_{i=1}^I \sum_{k=1}^{E_i(t)} B_i^\ssk - t) / t \to \sum_{i=1}^I \lmb_i / \mu_i - 1 > 0$ a.s. as~$t \to \infty$). Hence, $\rho > 1$, and then \cite[Theorem~7.1]{Harris} implies that $q_i < 1$ for all~$i$ and $q_G < 1$. \qed

{\it Proof of Statement~\ref{st:LLN}.} First we show that
\begin{equation} \label{eq26}
\sum_{k=1}^{\tau_n} Y_n^\ssk / \tau_n \to \ex Y \quad \text{in probability as $n \to \infty$.}
\end{equation}
By the independence between $\tau_n$ and $\{Y_n^\ssk\}_{k \in \ntr}$, for all $N \in \zplus$,
\[
\pr \{ | \sum_{k=1}^{\tau_n} Y_n^\ssk / \tau_n - \ex Y | \geq \eps, \tau_n = N \} = \pr \{ | \sum_{k=1}^{N} Y_1^\ssk / N - \ex Y | \geq \eps \} \pr \{ \tau_n = N \}.
\]
Then, for any $M \in \zplus$,
\[
\pr \{ | \sum_{k=1}^{\tau_n} Y_n^\ssk / \tau_n - \ex Y | \geq \eps \} \leq \pr \{ \tau_n \leq M \} + \sup_{N > M} \pr \{ | \sum_{k=1}^{N} Y_1^\ssk / N - \ex Y | \geq \eps \},
\]
and~\eqref{eq26} follows as we first let $n \to \infty$, and then $M \to \infty$.

Now that we have shown~\eqref{eq26}, the Statement follows by
\begin{align*}
&\pr \{ | \sum_{k=1}^{\tau_n} Y_n^\ssk / T_n - \tau \ex Y | \geq \eps \} \\ 
\leq \  &\pr \{ | \sum_{k=1}^{\tau_n} Y_n^\ssk / \tau_n| |\tau_n / T_n - \tau | \geq \eps/2 \} + \pr \{ \tau | \sum_{k=1}^{\tau_n} Y_n^\ssk / \tau_n - \ex Y | \geq \eps / 2 \} \\
\leq \ &\pr \{ |\tau_n / T_n - \tau | \geq \eps/2 \} + \pr \{ | \sum_{k=1}^{\tau_n} Y_n^\ssk / \tau_n| > C_1 \} \\
&+\pr \{ C_2 | \sum_{k=1}^{\tau_n} Y_n^\ssk / \tau_n - \ex Y | \} + \pr \{ \tau > C_2 \}
\end{align*}
as we first let $n \to \infty$, and then $C_1 \to \infty$, $C_2 \to \infty$.
\qed


\end{document}